\definecolor{mno}{rgb}{0.5,0.1,0.5}
\newcommand{\R}{\mathds R}
\newcommand{\Rd}{{\mathds R^d}}
\newcommand{\Pp}{\mathds P}
\newcommand{\Ee}{\mathds E}
\newcommand{\I}{\mathds 1}
\newcommand{\Bb}{\mathscr{B}}
\newcommand{\supp}{\operatorname{supp}}
\newtheorem{theorem}{Theorem}[section]
\newtheorem{lemma}[theorem]{Lemma}
\newtheorem{proposition}[theorem]{Proposition}
\newtheorem{corollary}[theorem]{Corollary}
\theoremstyle{definition}
\newtheorem{example}[theorem]{Example}
\begin{document}

\title[Strong Feller Continuity]{\bfseries Strong Feller Continuity of Feller Processes and Semigroups}

\author{Ren\'{e} L.\ Schilling \textrm{and} Jian Wang}
\thanks{\emph{R.\ Schilling:} TU Dresden, Institut f\"{u}r Mathematische Stochastik, 01062 Dresden, Germany. \texttt{rene.schilling@tu-dresden.de}}
\thanks{\emph{J.\ Wang:}  School of Mathematics and Computer Science, Fujian Normal University, 350007, Fuzhou, P.R. China \emph{and}
TU Dresden, Institut f\"{u}r Mathematische Stochastik, 01062 Dresden, Germany.
\texttt{jianwang@fjnu.edu.cn}}

\date{}

\maketitle

\begin{abstract}
We study two equivalent characterizations of the strong Feller property for a Markov process and of the associated sub-Markovian semigroup. One is described in terms of locally uniform absolute continuity, whereas the other uses local Orlicz-ultracontractivity. These criteria generalize many existing results on strong Feller continuity and seem to be more natural for Feller processes. By establishing the estimates of the first exit time from balls, we also investigate the continuity of harmonic functions for Feller processes which enjoy the strong Feller property.

\medskip\noindent
\textbf{Keywords:} Strong Feller property; locally uniform absolute continuity; local Orlicz-ultracontractivity; Feller processes; harmonic functions; first exit times.

\medskip\noindent
\textbf{MSC 2010:} 60J35; 47D07; 60G51; 60J25; 60J75; 47G05.
\end{abstract}

\section{Introduction}\label{intro}
Let $(\{X_t\}_{t\geq 0}, \Pp^x)$ be a Markov process on $\R^d$ with transition probability function $P_t(x,dy)$. For any $f\in B_b(\R^d)$, the set of bounded Borel measurable functions on $\R^d$, define
$$
    T_tf(x)=\int f(y)P_t(x,dy),\quad x\in \R^d.
$$
Then, $\{T_t\}_{t\ge 0}$ is a sub-Markovian semigroup on $B_b(\R^d)$, i.e.\ $T_t:B_b(\Rd)\to B_b(\Rd)$ and if $u\in B_b(\R^d)$ with $0\le u\le 1$ then $0\le T_tu\le 1$. Write $C_b(\R^d)$ (resp. $C_\infty(\R^d)$) for the set of bounded continuous functions (resp. continuous functions vanishing at infinity). The semigroup $\{T_t\}_{t\geq 0}$ has the
\begin{description}
    \item[\emph{Feller property}] if $T_t f\in C_\infty(\R^d)$ for all $f\in C_\infty(\R^d)$ and all $t\geq 0$.
    \item[$C_b$-\emph{Feller property}] if $T_t f\in C_b(\R^d)$ for all $f\in C_b(\R^d)$ and all $t\geq 0$.
    \item[\emph{strong Feller property}] if $T_t f\in C_b(\R^d)$ for all $f\in B_b(\R^d)$ and all $t\geq 0$.
\end{description}
A \emph{Feller semigroup} is a sub-Markovian semigroup which has the Feller property and which is on $C_\infty(\R^d)$ strongly continuous: $\lim_{t\to 0}\|T_t f - f\|_\infty =0$ for all $f\in C_\infty(\R^d)$. A \emph{Feller process} is a Markov process where the associated semigroup is a Feller semigroup.

In this paper we investigate the strong Feller property of a sub-Markovian semigroup $\{T_t\}_{t\ge 0}$. It is obvious that the strong Feller property implies the $C_b$-Feller property---but it does not necessarily entail the Feller property. Some criteria ensuring that a sub-Markovian semigroup has the $C_b$-Feller property are known, see e.g.\ \cite[Theorem 3.2]{S1}; this condition is by no means sharp and as far as we know no sharp condition is currently known. Here, we will establish two equivalent criteria for the strong Feller property which take different forms:  one is based on locally uniform absolute continuity, see Theorem \ref{thma}; the other one is based on local Orlicz-ultracontractivity, see Theorem \ref{ou1}.

For Feller processes, the above conditions can be simply expressed by locally uniform absolute continuity and local  Orlicz-ultracontractivity with respect to Lebesgue measure. For L\'evy processes---these are the spatially homogeneous Feller processes---this is not only equivalent to Hawkes's well-known result \cite{HAW} on the existence of transition densities, but also yields a new characterization for the strong Feller property of L\'{e}vy processes, see Corollary \ref{newfeller}. Strong Feller continuity is an interesting property in its own right, and is also needed in many applications, e.g.\ for the equivalence of transition probabilities, for ergodic properties, etc. We will also point out the relationship between semigroups (resp.\ resolvents) enjoying the strong Feller property and the continuity of harmonic functions for general Feller processes.

\section{General results: Sub-Markovian semigroups}\label{submark}
Let $\{T_t\}_{t\ge 0}$ be a sub-Markovian semigroup on $B_b(\R^d)$ which has the $C_b$-Feller property. At this point we do not assume strong continuity of the semigroup. Recall that  $\{T_t\}_{t\ge 0}$ is said to have the strong Feller property if $T_t : B_b(\R^d)\to C_b(\R^d)$ for all $t>0$. A general exposition of the strong Feller property can be found in \cite{Gs}. Denote by $P_t(x,\cdot)$ the kernel representing $T_t$, i.e.\ $P_t(x,A):=T_t\I_{A}(x)$ for all $t>0$, $x\in\R^d$ and $A\in {\Bb(\R^d)}$.

\subsection{Criterion I: Locally uniform absolute continuity}

\begin{theorem}\label{thma}
    Let $\{T_t\}_{t\ge 0}$ be a sub-Markovian semigroup on $B_b(\R^d)$. Then the following properties are equivalent:
    \begin{itemize}
    \item[(a)]
        $\{T_t\}_{t\ge 0}$ has the strong Feller property;
    \item[(b)]
        $\{T_t\}_{t\ge 0}$ has the $C_b$-Feller property and for every $t>0$ there exists some positive Radon measure $\mu_t$ on $\R^d$ such that the family of measures $(P_t(x,dy))_{x\in\R^d}$ is locally uniformly absolutely continuous with respect to $\mu_t$, i.e.\ for any compact set $K\subset\R^d$ it holds that
        $$
            \lim_{\delta\to 0}\;\sup_{A\in {\Bb(\R^d)},\, \mu_t(A)\le \delta}\;\sup_{z\in K} P_t(z,A) = 0.
        $$
    \item[(c)]
        $\{T_t\}_{t\ge 0}$ has the $C_b$-Feller property and for every $t>0$ there exists a probability measure $\mu_t$ on $\R^d$ such that the family of measures $(P_t(x,dy))_{x\in\R^d}$ is locally uniformly absolutely continuous with respect to $\mu_t$.
    \end{itemize}
    If, in addition, the semigroup $\{T_t\}_{t\ge0}$ is such that for every $f\in C_c^\infty(\R^d)$, $T_tf$ converges locally uniformly to $f$ as $t$ tends zero, then all statements above are also equivalent to
\begin{itemize}
    \item[(d)]$
        \{T_t\}_{t\ge 0}$ has the $C_b$-Feller property and there exists a probability measure $\mu$ on $\R^d$ such that for every $t>0$ the family of measures $(P_t(x,dy))_{x\in\R^d}$ is locally uniformly absolutely continuous with respect to $\mu$.
\end{itemize}
\end{theorem}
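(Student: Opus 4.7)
\emph{The plan} is to close the loop $(c) \Rightarrow (b) \Rightarrow (a) \Rightarrow (c)$ and then treat $(d)$ separately. The implication $(c) \Rightarrow (b)$ is immediate, since every probability measure is Radon. For $(b) \Rightarrow (a)$, a Lusin-type approximation suffices: fix $f \in B_b(\R^d)$ with $\|f\|_\infty \le 1$, a convergent sequence $x_n \to x$ sitting inside a compact set $K$, and $\varepsilon > 0$. The locally uniform absolute continuity on $K$ supplies $\delta > 0$ such that $\mu_t(B) < \delta$ forces $\sup_{z \in K} P_t(z, B) < \varepsilon$. By Lusin's theorem applied to the Radon measure $\mu_t$, combined with Tietze extension to preserve the sup-norm bound, pick $g \in C_b(\R^d)$ with $\|g\|_\infty \le 1$ and $\mu_t(\{f \neq g\}) < \delta$. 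Combining $|T_t f - T_t g| \le 2\, P_t(\cdot, \{f \neq g\})$ pointwise on $K$ with the $C_b$-Feller continuity of $T_t g$ yields $T_t f(x_n) \to T_t f(x)$, so $T_t f \in C_b$.

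The main obstacle is $(a) \Rightarrow (c)$; the key idea is to recast locally uniform absolute continuity as uniform integrability and invoke the Dunford--Pettis theorem. Fix $t > 0$ and let $\nu$ be a probability measure on $\R^d$ with a strictly positive continuous density (say, a Gaussian). Set $\mu_t(A) := \int P_t(z, A)\, \nu(dz)$; since $z \mapsto P_t(z, A)$ is continuous by the strong Feller property and $\supp \nu = \R^d$, vanishing of $\mu_t(A)$ forces $P_t(z, A) \equiv 0$, so $P_t(x, \cdot) \ll \mu_t$ for every $x$. Write $p_t(x, \cdot) := dP_t(x, \cdot)/d\mu_t \in L^1(\mu_t)$. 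For any $\phi \in L^\infty(\mu_t)$ with a Borel representative $\widetilde\phi \in B_b(\R^d)$, the identity $\int \phi\, p_t(x, \cdot)\, d\mu_t = T_t \widetilde\phi(x)$ and the strong Feller property together show that $x \mapsto p_t(x, \cdot)$ is continuous from $\R^d$ into $L^1(\mu_t)$ equipped with the weak topology $\sigma(L^1, L^\infty)$. For any compact $K$ the image $\{p_t(x, \cdot) : x \in K\}$ is therefore weakly compact in $L^1(\mu_t)$, and by Dunford--Pettis it is uniformly integrable, which unpacks to exactly $\lim_{\delta \to 0} \sup_{\mu_t(A) \le \delta} \sup_{x \in K} P_t(x, A) = 0$.

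Finally, $(d) \Rightarrow (c)$ is trivial. For $(a) \Rightarrow (d)$ under the extra strong-continuity hypothesis, aggregate the $\mu_{1/n}$'s from the previous step into a single reference, for instance $\mu := Z^{-1} \sum_{n \ge 1} 2^{-n}\, \mu_{1/n}$ (after normalization) or the resolvent measure $\int_0^\infty e^{-s}\, \mu_s\, ds$. Given $t > 0$, choose $n$ with $1/n < t$; the semigroup identity $P_t(x, A) = \int P_{1/n}(y, A)\, P_{t-1/n}(x, dy)$ reduces the locally uniform absolute continuity at time $t$ to two ingredients: (i) tightness of $\{P_{t-1/n}(x, \cdot): x \in K\}$, which follows from the $C_b$-Feller property via Dini applied to $T_{t-1/n} f_n \uparrow T_{t-1/n} \I$ for an exhausting sequence $f_n \uparrow \I$; and (ii) the locally uniform absolute continuity at time $1/n$ with respect to $\mu_{1/n} \le 2^n\, \mu$ on a compact enlargement of $K$. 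The strong-continuity hypothesis ensures that this aggregation behaves coherently as $t \downarrow 0$, and handling this small-time compatibility is the most delicate step of the argument.
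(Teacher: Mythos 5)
Your overall architecture is sound and, in two of the three substantive implications, genuinely different from the paper's. For $(a)\Rightarrow(c)$ the paper builds the same reference measure $\mu_t=\int w(x)P_t(x,\cdot)\,dx$ (normalized), but then proceeds by hand: it picks sets $A_n$ nearly attaining the supremum, forms $\tilde A_n=\bigcup_{k\ge n}A_k$, and applies Dini's theorem to $T_t\I_{\tilde A_n}\downarrow T_t\I_{\tilde A}=0$. Your route --- weak continuity of $x\mapsto p_t(x,\cdot)$ into $(L^1(\mu_t),\sigma(L^1,L^\infty))$, weak compactness of the image of $K$, Dunford--Pettis, uniform integrability --- is a correct and cleaner functional-analytic packaging of the same fact (it is essentially the compactness statement the paper isolates in Proposition \ref{strong1}). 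For $(a)\Rightarrow(d)$ the paper uses the resolvent measure $\int_0^\infty e^{-t}\,dt\int w(x)P_t(x,\cdot)\,dx$ and needs joint continuity of $(t,x)\mapsto P_t(x,N)$ (Proposition \ref{strong2}), which is exactly where the extra hypothesis $T_tf\to f$ enters. Your countable aggregation $\mu=\sum_n 2^{-n}\mu_{1/n}$ combined with Chapman--Kolmogorov ($P_t(x,A)=\int P_{1/n}(y,A)\,P_{t-1/n}(x,dy)$, tightness of $\{P_{t-1/n}(x,\cdot):x\in K\}$ via Dini, and $\mu_{1/n}\le 2^n\mu$) is complete as it stands and does not appear to use the extra hypothesis at all; your closing remark about a ``delicate small-time compatibility'' is a red herring --- for each fixed $t>0$ one simply picks $n$ with $1/n<t$ and the estimate closes. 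So your argument actually yields $(a)\Rightarrow(d)$ under weaker assumptions than the paper states.

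The one genuine gap is in $(b)\Rightarrow(a)$: you invoke Lusin's theorem to produce $g\in C_b(\R^d)$ with $\mu_t(\{f\neq g\})<\delta$, but in statement (b) the measure $\mu_t$ is only a positive Radon measure, possibly of infinite total mass, and Lusin's theorem in its standard form requires $f$ to vanish outside a set of finite measure. As written the step fails for, say, $\mu_t=\mathrm{Leb}$ and a generic $f\in B_b(\R^d)$. The fix is exactly the truncation the paper performs: write $f=f\chi+f(1-\chi)$ with a cut-off $\chi$, control $\sup_{x\in L}T_t(|f|(1-\chi))(x)$ via Dini's theorem applied to $T_t\chi_k\uparrow T_t1$ (which uses the $C_b$-Feller property), and apply your Lusin argument only to $f\chi$, which is supported in a set of finite $\mu_t$-measure. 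With that insertion your Lusin route is a valid, and arguably simpler, alternative to the paper's Egorov-plus-$L^1$-density argument for the same implication.
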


Usually it is straightforward to check the $C_b$-Feller property. For example, according to \cite[Theorem 3.2; (ii)$\Rightarrow$ (iii)]{S1}, $\{T_t\}_{t\ge 0}$ has the $C_b$-Feller property if and only if for each $t\ge0$, $T_t$ maps $C_c^\infty(\R^d)$ into $C_b(\R^d)$ and $T_t1\in C_b(\R^d)$. By Theorem \ref{thma} and its proof, we have the following natural way to prove the strong Feller property once we have the $C_b$-Feller property.

\begin{corollary}\label{thma1}
    Let $\{T_t\}_{t\ge0}$ be a sub-Markovian semigroup which has the $C_b$-Feller property. If for every $t>0$, the kernel $P_t(x,\cdot)$ representing $T_t$ has a density with respect to some positive Radon measure $\mu$, i.e.\ $$
        P_t(x,dy)=p_t(x,y)\mu(dy),
    $$
    and if, moreover, the density function $p_t(x,y)$ is locally bounded on $\R^d\times\R^d$, then $\{T_t\}_{t\ge0}$ has the strong Feller property.
\end{corollary}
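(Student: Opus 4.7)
The plan is to verify condition (b) of Theorem~\ref{thma} with $\mu_t=\mu$ and then invoke the theorem. Fix a compact set $K\subset\R^d$ and $t>0$; the task is to show that $P_t(z,A)=\int_A p_t(z,y)\,\mu(dy)$ can be made arbitrarily small uniformly in $z\in K$ whenever $\mu(A)$ is small. The difficulty is that $A$ need not be relatively compact, so local boundedness of $p_t$ by itself is not enough: a uniform tail estimate has to be produced first, and this is where the main work lies.

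For the tail I would choose continuous cutoffs $\phi_R\in C_b(\R^d)$ with $0\le\phi_R\le 1$, $\phi_R\ge \I_{B(0,R)^c}$, and $\phi_R\downarrow 0$ pointwise as $R\to\infty$; an explicit choice is $\phi_R(y)=\min\{1,(|y|-R+1)^+\}$, which is monotone decreasing in $R$ for each fixed $y$. The $C_b$-Feller hypothesis gives $T_t\phi_R\in C_b(\R^d)$, and positivity of $T_t$ makes the sequence $T_t\phi_R$ monotone decreasing as well. Since $\{T_t\}_{t\ge 0}$ is sub-Markovian, $\int p_t(z,y)\,\mu(dy)=T_t\I(z)\le 1$, so dominated convergence yields $T_t\phi_R(z)\downarrow 0$ pointwise. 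Dini's theorem then upgrades this to $\sup_{z\in K}T_t\phi_R(z)\to 0$. Given $\varepsilon>0$, fix $R$ large enough that $\sup_{z\in K} P_t(z,B(0,R)^c)\le \sup_{z\in K}T_t\phi_R(z)<\varepsilon/2$.

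Local boundedness of $p_t$ then supplies a constant $M=M(K,R,t)$ with $p_t(z,y)\le M$ on $K\times\overline{B(0,R)}$, so for every Borel $A$ and every $z\in K$,
$$
    P_t(z,A)=\int_{A\cap B(0,R)} p_t(z,y)\,\mu(dy)+\int_{A\cap B(0,R)^c} p_t(z,y)\,\mu(dy)\le M\mu(A)+\frac{\varepsilon}{2}.
$$
Thus $\mu(A)\le \varepsilon/(2M)$ forces $P_t(z,A)\le\varepsilon$, which is exactly the locally uniform absolute continuity required in Theorem~\ref{thma}(b); the strong Feller property now follows from that theorem. The pleasant feature of this argument is that the tail is controlled essentially for free from the $C_b$-Feller property via the monotone approximation $\phi_R\downarrow 0$ combined with Dini's theorem, with no quantitative assumption on $p_t$ beyond local boundedness.
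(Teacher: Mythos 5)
Your argument is correct, and it follows a genuinely different route from the paper's. You verify hypothesis (b) of Theorem \ref{thma} in full --- i.e.\ locally uniform absolute continuity over \emph{all} Borel sets $A$ --- which forces you to first prove the uniform tightness estimate $\sup_{z\in K}P_t(z,B(0,R)^c)\to 0$; your derivation of this via the monotone cutoffs $\phi_R\downarrow 0$, the $C_b$-Feller property and Dini's theorem is sound (it mirrors the cutoff argument \eqref{sch1} already used inside the proof of Theorem \ref{thma}), and once the tail is disposed of, the local bound $M$ on $p_t$ over $K\times\overline{B(0,R)}$ gives $P_t(z,A)\le M\mu(A)+\varepsilon/2$, which is exactly condition (b). The paper instead reopens the proof of $(b)\Rightarrow(a)$ and observes that absolute continuity is only ever invoked for the exceptional set $N$ produced by Egorov's theorem, which is contained in a fixed ball; for such bounded $N$ the estimate $\sup_{z\in L}P_t(z,N)\le \mu(N)\sup_{(z,y)\in L\times K}p_t(z,y)$ follows immediately from local boundedness, with no tail estimate needed. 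What your approach buys is a clean, black-box deduction from the theorem as stated, plus the slightly stronger intermediate conclusion that the kernels are in fact locally uniformly absolutely continuous with respect to $\mu$ on all of $\Bb(\R^d)$; what the paper's approach buys is brevity, since it avoids re-establishing the tail control. Both are complete proofs.
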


\bigskip

To prove Theorem \ref{thma}, we first derive some properties of semigroups enjoying the strong Feller property which are also interesting for themselves.
\begin{proposition}\label{strong1}
    Let $\{T_t\}_{t\ge 0}$ be a sub-Markovian semigroup with the strong Feller property. Then, for every $t>0$, there exists a probability measure $\mu_t$ such that $T_t$ is well defined on $L^\infty(\mu_t)$ and $T_t:L^\infty(\mu_t)\to  C(\R^d)$ is a compact operator. Consequently, for all compact sets $K\subset \R^d$,
\begin{equation*}
    \lim _{\delta\to 0}\;
        \sup_{|x-y|\le \delta,\: x,y\in K}\;
        \sup_{\|u\|_{L^\infty(\mu_t)}\le 1}
        |T_tu(x)-T_tu(y)|
        =0.
\end{equation*}
\end{proposition}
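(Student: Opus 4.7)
Plan. My plan is to build a probability reference measure $\mu_t$ against which every $P_t(x,\cdot)$ is absolutely continuous, to use this to extend $T_t$ to $L^\infty(\mu_t)$, and then to deduce compactness (and hence the displayed equicontinuity) from the strong Feller hypothesis.

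First I would fix $t>0$ and pick a countable dense set $\{x_n\}_{n\ge 1}\subset\Rd$, setting
$$
\mu_t := c_t \sum_{n\ge 1} 2^{-n}\, P_t(x_n,\cdot),
$$
with $c_t>0$ chosen so that $\mu_t$ is a probability measure. The strong Feller property gives that $x\mapsto P_t(x,A)=T_t\I_A(x)$ is continuous for every $A\in\Bb(\Rd)$, so $\mu_t(A)=0$ forces $P_t(x_n,A)=0$ for all $n$, and continuity together with density of $\{x_n\}$ then gives $P_t(x,A)=0$ for every $x\in\Rd$. Thus $P_t(x,\cdot)\ll\mu_t$ uniformly in $x$, which means $T_tu$ is independent of the bounded Borel representative chosen from a class in $L^\infty(\mu_t)$. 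Hence $T_t$ drops to a bounded linear operator $L^\infty(\mu_t)\to C_b(\Rd)$ of norm at most $1$.

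Next I would reduce compactness to equicontinuity. For $C(\Rd)$ equipped with the Fr\'echet topology of uniform convergence on compacts, compactness of $T_t:L^\infty(\mu_t)\to C(\Rd)$ is equivalent---via Arzel\`a-Ascoli on each member of an exhaustion $K_j\uparrow\Rd$ together with the standard diagonal extraction---to uniform boundedness of $\{T_tu:\|u\|_{L^\infty(\mu_t)}\le 1\}$ (trivial, since $|T_tu|\le 1$) plus equicontinuity on every compact $K\subset\Rd$. This equicontinuity is exactly the displayed ``consequently'' formula. Moreover, because $P_t(x,\cdot),P_t(y,\cdot)\ll\mu_t$, the inner supremum there equals the total variation distance $\|P_t(x,\cdot)-P_t(y,\cdot)\|_{\mathrm{TV}}$. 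So everything reduces to showing that $x\mapsto P_t(x,\cdot)$ is uniformly continuous in total variation on each compact $K\subset\Rd$.

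This uniform TV continuity is the main obstacle. Strong Feller by itself yields only setwise continuity $P_t(y,A)\to P_t(x,A)$ as $y\to x$, which is strictly weaker than TV continuity in general, so the semigroup structure $T_t=T_{t/2}\circ T_{t/2}$ must be used. My plan is to first establish, by a contradiction/extraction argument, a local uniform absolute continuity of $\{P_t(x,\cdot):x\in K\}$ with respect to $\mu_t$: if it fails there are $A_k\in\Bb(\Rd)$ with $\mu_t(A_k)\le 2^{-k}$ and $x_k\in K$ with $P_t(x_k,A_k)\ge\varepsilon_0$; passing to a subsequence $x_k\to x\in K$ by compactness, setting $B_n:=\bigcup_{k\ge n}A_k$, using continuity of $z\mapsto P_t(z,B_n)$ (strong Feller) to propagate the lower bound $\varepsilon_0$ to $x$, and then $\sigma$-continuity from above of $P_t(x,\cdot)$ at $\bigcap_n B_n$ contradicts $\mu_t(\bigcap_n B_n)=0$ and $P_t(x,\cdot)\ll\mu_t$. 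With this uniform absolute continuity in hand I would combine it with the representation
$$
T_tu(x)-T_tu(y)=\int (T_{t/2}u)(z)\,\bigl[P_{t/2}(x,dz)-P_{t/2}(y,dz)\bigr]
$$
coming from the semigroup property, in which $T_{t/2}u$ is a uniformly bounded continuous function by strong Feller of $T_{t/2}$, in order to close the equicontinuity estimate uniformly in $u$ with $\|u\|_{L^\infty(\mu_t)}\le 1$. The delicate point---and the heart of the argument---is converting the merely setwise vanishing of the signed kernel difference $P_{t/2}(x,\cdot)-P_{t/2}(y,\cdot)$ into uniform vanishing against the equicontinuous family $\{T_{t/2}u\}$; this is where the full semigroup hypothesis and Vitali--Hahn--Saks-type reasoning for uniformly absolutely continuous families of measures carry the proof.
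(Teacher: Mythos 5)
Your reductions are sound: the countable mixture $\mu_t=c_t\sum_n 2^{-n}P_t(x_n,\cdot)$ over a dense set does dominate every $P_t(x,\cdot)$ (by strong Feller continuity of $x\mapsto P_t(x,A)$), the extension of $T_t$ to $L^\infty(\mu_t)$ is then well defined, and compactness into $C(\R^d)$ with the local uniform topology is indeed equivalent, via Arzel\`a--Ascoli, to the displayed equicontinuity, which is in turn local total-variation continuity of $x\mapsto P_t(x,\cdot)$. Your extraction argument for the local uniform absolute continuity of $\{P_t(x,\cdot):x\in K\}$ with respect to $\mu_t$ is also correct. But the proof stops exactly where it becomes nontrivial. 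Uniform absolute continuity together with setwise convergence $P_{t/2}(y,A)\to P_{t/2}(x,A)$ does \emph{not} imply total-variation convergence: the probability measures on $[0,2\pi]$ with densities proportional to $1+\sin(nz)$ are uniformly absolutely continuous and converge setwise to the uniform distribution (Riemann--Lebesgue), yet their total-variation distance to the limit stays bounded below. Likewise, testing the signed measure $P_{t/2}(x,\cdot)-P_{t/2}(y,\cdot)$ against the family $\{T_{t/2}u\}$ only helps if that family is equicontinuous---which is precisely the assertion of the proposition at time $t/2$, so invoking it is circular. A Vitali--Hahn--Saks argument gives uniform absolute continuity of a setwise convergent sequence; it does not give norm convergence and cannot carry the last step.

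The missing argument (the one the paper uses) runs as follows. Given $u_j$ in the unit ball of $L^\infty(\mu_{t/2})$, Banach--Alaoglu yields a weak-$*$ convergent subsequence $u_{j(k)}$, and since $p_{t/2}(x,\cdot)\in L^1(\mu_{t/2})$ for each $x$, the functions $T_{t/2}u_{j(k)}$ converge pointwise. Then, for $k,l\ge m$,
$$
|T_t u_{j(k)}-T_t u_{j(l)}|
\le T_{t/2}\Big(\sup_{k,l\ge m}\big|T_{t/2}u_{j(k)}-T_{t/2}u_{j(l)}\big|\Big)=:T_{t/2}h_m,
$$
where $h_m\downarrow 0$ pointwise; by dominated convergence $T_{t/2}h_m\downarrow 0$ pointwise, each $T_{t/2}h_m$ is continuous by the strong Feller property applied to the envelope $h_m$, and Dini's theorem upgrades this to locally uniform convergence, so $\{T_t u_{j(k)}\}$ is locally uniformly Cauchy in $C(\R^d)$. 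It is this second application of the semigroup property and of strong Feller continuity, not a Vitali--Hahn--Saks argument, that converts pointwise into locally uniform behaviour; you need to supply it (or the equivalent Dellacherie--Meyer ``ultra-Feller'' lemma) to close the proof.
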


\begin{proof}
    For every $t>0$ denote by $P_t(x,\cdot)$ the kernel representing $T_t$. Choose a continuous function $w > 0$ on $\R^d$ such that $\int w(x)\,dx=1$. For every $t>0$ define
    $$
        \mu_t(\cdot)
        =\frac{\int w(x)P_t(x,\cdot)\,dx}{\int w(x)P_t(x,\R^d)\,dx}.
    $$
    If $N\in \Bb(\R^d)$ satisfies that $\mu_t(N)=0$, then we have $P_t(x,N)=0$ for Lebesgue a.e.\ $x$. Since by the strong Feller property the function $x\mapsto T_t\I_N(x) = P_t(x,N)$ is continuous, we find that $P_t(x,N)=0$ for all $x\in\R^d$. This means that $P_t(x,\cdot)$ is absolutely continuous with respect to $\mu_t$ and has a Radon-Nikod\'ym density $p_t(x,y)$. Now, for any $u\in L^\infty(\mu_t)$, set $N=\{x\in\R^d\::\: |u(x)| > \|u\|_{L^\infty(\mu_t)}\}\in \Bb(\R^d)$. Then, $\mu_t(N)=0$. Set $\tilde{u}:= u\cdot \I_{N^c}$ and note that $\tilde u$ is bounded and Borel measurable. Naturally, we define
    $$
        T_tu(x)
        :=\int u(y)P_t(x,dy)
        =\int u(y)p_t(x,y)\,\mu_t(dy)
        \quad\text{for\ \ } u\in L^\infty(\mu_t).
    $$
    Clearly, It holds that $T_tu(x)=T_t\tilde{u}(x)$ for every $x\in\R^d$. That is, $T_t$ is well defined on $L^\infty(\mu_t)$ and $T_t(L^\infty(\mu_t))\subset C_b(\R^d)$, due to the strong Feller property.

    To prove our second assertion, it is sufficient to show that the image of the unit ball $U:=\{u\in L^\infty(\mu_t):\|u\|_{L^\infty(\mu_t)}\le1\}$ under $T_t$ is sequentially compact in $C(\R^d)$ with local uniform topology. Since $C(\R^d)$ is a metric space, this implies compactness.

    Noticing that $L^\infty(\mu_t)=\big[L^1(\mu_t)\big]^*$, the Banach-Alaoglu theorem
    tells us that $U$ is weak-$*$, i.e.\ $\sigma(L^\infty(\mu_t),L^1(\mu_t))$, compact. This entails that every sequence $\{u_j\}_{j\in \mathds{N}}\subset U$ has a weak-$*$ convergent subsequence $\{u_{j(k)}\}_{k\in \mathds{N}}$ and, for a suitable $u\in L^\infty(\mu_t)$, the limit
    $$
        \lim_{k\to \infty}\int u_{j(k)}(y)\phi(y)\,\mu_t(dy)
        =\int u(y)\phi(y)\,\mu_t(dy)
        \quad\text{for every\ \ } \phi\in L^1(\mu_t)
    $$
    exists. Therefore, the sequence
    $$
        T_t u_{j(k)}(x)=\int  u_{j(k)}(y)\,P_t(x,dy)
        = \int  u_{j(k)}(y)p_t(x,y)\,\mu_t(dx)
    $$
    converges pointwise for every $x\in\R^d$. Moreover, for $k, l, m\in \mathds{N}$ with $k, l\ge m$,
    $$
        |T_{2t}u_{j(k)}-T_{2t}u_{j(l)}|
        \le T_t\big|T_{t}u_{j(k)}-T_{t}u_{j(l)}\big|
        \le T_t\Big(\sup_{k,l\ge m} \big|T_{t}u_{j(k)}-T_{t}u_{j(l)}\big|\Big).
    $$
    Note that $h_m:=\sup_{k,l\ge m}\big|T_{t}u_{j(k)}-T_{t}u_{j(l)}\big|$ decreases to 0 as $m \to \infty$, and so does the sequence $(T_th_m)_{m\in \mathds{N}}$, thanks to the dominated convergence theorem. By the strong Feller property, the functions $(T_th_m)_{m\in \mathds{N}}$ are continuous, and Dini's theorem shows locally uniform convergence. This means that $\{T_tu_{j(k)}\}_{k\in \mathds{N}}$ is a Cauchy sequence in $C(\R^d)$ under locally uniform convergence, and so the limit $T_tu=\lim_{k\to \infty}T_tu_{j(k)}$ defines a continuous function. The proof is complete.
\end{proof}

Since $B_b(\R^d)\subset L^\infty(\mu_t)$, the following conclusion is an immediate consequence of Proposition \ref{strong1}.

\begin{corollary}\label{compbu}
    Let $\{T_t\}_{t\ge0}$ be a strong Feller semigroup. Then $T_t:B_b(\R^d)\to  C(\R^d)$ is for every $t>0$ a compact operator. In particular, for all compact sets $K\subset \R^d$,
    \begin{equation}\label{com}
        \lim _{\delta\to 0}\:
        \sup_{|x-y|\le \delta,\,\, x,y\in K}\:
        \sup_{\|u\|_\infty\le 1}
        |T_tu(x)-T_tu(y)|=0.
    \end{equation}
\end{corollary}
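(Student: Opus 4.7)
The plan is to derive both assertions from Proposition \ref{strong1} via the obvious norm comparison $\|u\|_{L^\infty(\mu_t)} \le \|u\|_\infty$, which holds for every $u\in B_b(\R^d)$ because the $\mu_t$-essential supremum cannot exceed the pointwise supremum (here $\mu_t$ is the probability measure supplied by Proposition \ref{strong1}). Under this inclusion, the unit ball $\{u\in B_b(\R^d):\|u\|_\infty\le 1\}$ sits inside the unit ball of $L^\infty(\mu_t)$.

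For the compactness claim, Proposition \ref{strong1} already tells us that $T_t$ sends the unit ball of $L^\infty(\mu_t)$ to a relatively compact subset of $C(\R^d)$ with the locally uniform topology. Since the image of the smaller $B_b(\R^d)$-unit ball is contained in this set, it too is relatively compact, which is precisely the statement that $T_t:B_b(\R^d)\to C(\R^d)$ is a compact operator. The modulus-of-continuity estimate (\ref{com}) then follows immediately: its supremum is taken over a subset of the one appearing in Proposition \ref{strong1}, hence is dominated by it and therefore tends to $0$ as $\delta\to 0$.

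There is no real obstacle; the entire argument is a one-line comparison of function-space norms combined with the already established Proposition \ref{strong1}. The only point to note explicitly is that, although $\|\cdot\|_{L^\infty(\mu_t)}$ ignores $\mu_t$-null sets while $\|\cdot\|_\infty$ does not, this discrepancy only makes the $L^\infty(\mu_t)$-norm smaller, which is exactly the direction of inequality that we need.
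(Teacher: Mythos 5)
Your argument is correct and is exactly the paper's: the authors state that the corollary is "an immediate consequence of Proposition \ref{strong1}" because $B_b(\R^d)\subset L^\infty(\mu_t)$ with $\|u\|_{L^\infty(\mu_t)}\le\|u\|_\infty$, so the $B_b$-unit ball sits inside the $L^\infty(\mu_t)$-unit ball and both the compactness and the estimate \eqref{com} follow by restriction of the suprema. Your explicit remark that the essential supremum only shrinks the norm, and that $T_t$ defined via the kernel agrees on $B_b(\R^d)$ with the original operator, is all that is needed.
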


According to \cite[Corollary 1.3]{JSLP}, we know that if $\{T_t\}_{t\ge 0}$ is a sub-Markovian semigroup with the strong Feller property, all operators $T_t:B_b(\R^d)\to  C(\R^d)$ are bounded. The main point of  Corollary \ref{compbu} is to show that the strong Feller continuity guarantees that the bounded operator $T_t$ are actually compact. A close inspection of the proof of Proposition \ref{strong1} reveals that the image of the unit ball of $L^\infty(\mu_t)$ is just that of $B_b(\R^d)$ under the operator $T_t$, which is a compact subset of $C(\R^d)$. The following example shows that this property is not fulfilled if we only assume the $C_b$-Feller property of the semigroup.

\begin{example}
    Consider the shift semigroup,
    $$
        \tau_t:B_b(\R^d)\to  B_b(\R^d),
        \quad \tau_tu(x):=u(x-t), \quad x\in\R^d,\; t\ge0.
    $$
    Obviously, $\{\tau_t\}_{t\ge0}$ enjoys the $C_b$-Feller property but it does not have the strong Feller property. The functions
    $$
        u_j(x):=\I_{[-\pi,\pi]}(x)\,\frac{1}{\sqrt{\pi}}\sin (jx),\quad j\in \mathds{N},
    $$
    are clearly continuous and uniformly bounded. Using the $L^2(dx)$-orthonormality of this family, we see
    $$
        \int |u_j(x)-u_k(x)|^2\,dx
        =\int_{-\pi}^\pi\big(u_j^2(x)+u_k^2(x)\big) dx=2,
    $$
    which means that $\{u_j\}_{j\in \mathds{N}}$ does not converge in $L^2(dx)$ and cannot have any pointwise convergent subsequence.

    Since the shifted sequence $\{u_j(\cdot-t)\}_{j\in \mathds{N}}$ inherits this property, we obtain that $\tau_t(\{u_j\}_{j\in \mathds{N}})$ is not even weakly compact in $C(\R^d)$. (Under the locally compact topology on $C(\R^d)$, the weak convergence is just pointwise convergence.)
\end{example}

\bigskip

The next proposition is essentially taken from \cite[Proposition 2.10]{BLIE}. For the sake of completeness, we include its proof.

\begin{proposition}\label{strong2}
     Let $\{T_t\}_{t\ge0}$ be a sub-Markovian semigroup with the strong Feller property and such that for every $u\in C_c^\infty(\R^d)$, $T_tu$ converges locally uniformly to $u$ as $t$ tends zero. Then the map $(s,x,u)\mapsto T_su(x)$ is jointly continuous for all $s\ge 0$, $x\in\R^d$ and $u\in B_b(\R^d)$.
\end{proposition}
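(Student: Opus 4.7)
The plan is a triangle-inequality reduction that peels off the three arguments one by one: sub-Markovianity for continuity in $u$, Corollary \ref{compbu} for continuity in $x$, and a strong Feller factorisation combined with the $C_c^\infty$ hypothesis at the origin for continuity in $s$. Fix a target $(s_0, x_0, u_0)$ with $s_0 > 0$; for $(s, x, u) \to (s_0, x_0, u_0)$ the estimate
\begin{equation*}
|T_s u(x) - T_{s_0} u_0(x_0)|
\le \|u - u_0\|_\infty + |T_s u_0(x) - T_s u_0(x_0)| + |T_s u_0(x_0) - T_{s_0} u_0(x_0)|,
\end{equation*}
whose first term uses sub-Markovian contractivity, reduces the problem to joint continuity of $(s,x) \mapsto T_s u_0(x)$ at $(s_0, x_0)$ for fixed $u_0 \in B_b(\R^d)$.

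For the spatial term I would factor $T_s u_0 = T_{s_0/2}\bigl(T_{s - s_0/2} u_0\bigr)$ for $s \in [s_0/2, 3s_0/2]$. Since the family $\{T_{s-s_0/2} u_0\}_s$ is uniformly bounded by $\|u_0\|_\infty$, Corollary \ref{compbu} applied at $t = s_0/2$ produces the required equicontinuity: $|T_s u_0(x) - T_s u_0(x_0)| \to 0$ as $x \to x_0$, uniformly in $s$ near $s_0$.

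For the temporal term, set $v := T_{s_0/2} u_0 \in C_b(\R^d)$ using strong Feller, so that $T_s u_0(x_0) = T_{s - s_0/2} v(x_0)$ and the task becomes continuity of $r \mapsto T_r v(x_0)$ at $r = s_0/2$. Via the identity $T_{r+h} v(x_0) - T_r v(x_0) = \int (T_h v - v)(y)\, P_r(x_0, dy)$ and dominated convergence, this reduces to the pointwise statement $T_h v(y) \to v(y)$ as $h \downarrow 0$ for every $v \in C_b(\R^d)$ and every $y$. To bootstrap from the $C_c^\infty$ hypothesis, approximate $v$ on $B(y, R)$ by $\phi \in C_c^\infty$ and split $T_h v(y) - v(y) = (T_h \phi - \phi)(y) + T_h(v - \phi)(y) + (\phi - v)(y)$; the first term vanishes by hypothesis, the third is small by approximation, and the middle term is controlled by tightness $P_h(y, B(y, R)^c) \to 0$ (uniformly in $h$ near $0$), itself extracted by testing a $C_c^\infty$ bump equal to $1$ on $B(y, R)$ against the hypothesis.

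The main obstacle I anticipate is precisely this last upgrade from $C_c^\infty$ strong continuity at zero to pointwise strong continuity on $C_b(\R^d)$, together with the accompanying tightness of $(P_h(y, \cdot))_{h \downarrow 0}$. Everything else is a clean application of Corollary \ref{compbu} and the semigroup structure, whereas the tightness genuinely exploits sub-Markovianity to prevent mass escape to infinity as $h \downarrow 0$. The boundary case $s_0 = 0$ requires $u_0$ to be continuous at $x_0$ and is handled by applying the same temporal bootstrap directly to $u_0$ (after a preliminary $C_c^\infty$ truncation) and combining it with the equicontinuity of the spatial step.
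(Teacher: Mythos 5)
Your strategy is essentially the paper's: bootstrap the $C_c^\infty$-continuity at $t=0$ to $C_b(\R^d)$ via cut-off functions and tightness, factor $T_s$ through a strong-Feller smoothing step $T_{s_0/2}$, get spatial equicontinuity uniformly in $\|u\|_\infty\le 1$ from Corollary \ref{compbu}, and assemble everything with the triangle inequality. The spatial and $u$-steps are fine, and your bump-function argument for $T_h v(y)\to v(y)$, $v\in C_b(\R^d)$, is correct (and in fact delivers convergence locally uniformly in $y$, not just pointwise, since the approximation of $v$ by $\phi$ and the tightness bound $P_h(y,\cdot)$ can both be made uniform over $y$ in a compact set).

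The one genuine gap is the temporal step for $s\uparrow s_0$. Your identity $T_{r+h}v(x_0)-T_r v(x_0)=\int (T_h v-v)\,dP_r(x_0,\cdot)$ with $r=s_0/2$ fixed and dominated convergence only treats the right limit. For $s<s_0$ you get $T_{s_0}u_0(x_0)-T_s u_0(x_0)=\int (T_{s_0-s}v-v)(y)\,P_{s-s_0/2}(x_0,dy)$, where the \emph{measure} now moves with $s$; pointwise convergence of the integrand plus dominated convergence no longer suffices. To close this you need exactly the two ingredients you already flagged, but in strengthened form: locally uniform (not merely pointwise) convergence $T_h v\to v$ on compacts, together with tightness of the family $P_r(x_0,\cdot)$ for $r$ in a neighbourhood of $s_0/2$, so that the integrand is $\le\varepsilon$ on a large compact $K$ and the mass outside $K$ is uniformly small. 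The paper handles this by fixing $t_0=s_0-\delta_1$ with $\delta_1$ small, setting $g=T_{t_0}u_0\in C_b(\R^d)$, and writing $T_s u_0-T_{s_0}u_0=T_{s-t_0}(T_{|s_0-s|}g-g)$ (in the appropriate order for each sign of $s-s_0$), so that the outer operator always has a \emph{small} time index and the escape of mass is controlled by the small-time bound $\|\I_K T_r(1-\chi)\|_\infty<\varepsilon$, $r\le\delta_1$, coming from the same bootstrap. With that modification your argument goes through; as written, left-continuity in $s$ is not established.
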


\begin{proof}
    First we claim that for any compact set $K\subset \R^d$ and every $\varepsilon>0$ and $g\in C_b(\R^d)$, there is some $\delta:=\delta(K, g, \varepsilon)>0$ such that
    \begin{equation}\label{poro1}
        \sup_{t\le \delta}\|(T_tg-g)\I_K\|_\infty \le \varepsilon.
    \end{equation}
    Indeed, choose a cut-off function $\chi\in C_c^\infty(\R^d)$ with $\I_K\le \chi\le 1.$ Without loss of generality, we may assume that $0\le g\le 1$. Because of the continuity of $T_t$ at $t=0$ on $C_c^\infty(\R^d)$, there is some $\delta>0$ such that
    $$
        [g\chi-T_t(g\chi)]\,\I_K\le \varepsilon,\quad
        [(1-g)\chi-T_t((1-g)\chi)]\I_K\le \varepsilon\quad
        \text{for all\ \ } t\le \delta.
    $$
    So,
    $$
        (g-\varepsilon)\I_K=(g\chi-\varepsilon)\I_K\le \I_KT_t(g\chi)\le \I_KT_tg
    $$
    and
    \begin{align*}
        \I_KT_tg
        = \I_K[T_t1-T_t(1-g)]
        &\le \I_K[T_t1-T_t((1-g)\chi)]\\
        &\le \I_K[T_t1-(1-g)\chi+\varepsilon]\le \I_K[g+\varepsilon].
    \end{align*}
    This proves \eqref{poro1}.

    \smallskip
    Let $u\in B_b(\R^d)$ with $0\le u\le 1$, $t>0$ and $\varepsilon>0$. Given a compact set $K\subset \R^d$, choose $\chi\in C_c^\infty(\R^d)$ such that $\I_{K}\le \chi\le 1$. By \eqref{poro1}, there exists some $0<\delta_1<t$ such that for every $0<s\le \delta_1$,
    $$
        \|\I_K T_s(1-\chi)\|_\infty <\varepsilon.
    $$
    Let $t_0=t-\delta_1$ and $g=T_{t_0}u$. Then $g\in C_b(\R^d)$ and using \eqref{poro1} once again, we find some $0<\delta<\delta_1$ such that for every $0<s<\delta$,
    $$
        \|\I_{\supp(\chi)} (T_sg-g)\|_\infty <\varepsilon.
    $$
    Consider now $t\le s<t+\delta$. Since $1\le 1-\chi+\I_{\supp(\chi)}$ and $|T_{s-t}g-g|\le \|T_{s-t}g\|_\infty + \| g\|_\infty \le 2$, we have, on $K$,
    \begin{align*}
        |T_su-T_tu|
        &=|T_{t-t_0}(T_{s-t}g-g)|\\
        &\le 2\,T_{t-t_0}(1-\chi)+T_{t-t_0}(\I_{\supp(\chi)}|T_{s-t}g-g|)\\
        &\le 2\varepsilon+T_{t-t_0}\varepsilon\\
        &\le 3\varepsilon.
    \end{align*}
    Similarly, if $t-\delta<s\le t$, then
    $$
        |T_su-T_tu|=|T_{t-t_0}(g-T_{s-t}g)|\le 3\varepsilon\quad\text{on\ \ } K.
    $$
    Thus, $T_su$ converges to $T_tu$ uniformly on $K$ as $s$ tends to $t$.

    According to \eqref{com}, for fixed $s>0$, $T_su(y)$ converges to $T_su(x)$ uniformly in $x,y\in K$ and $\|u\|_\infty\leq 1$ as $y$ tends to $x$. The required assertion follows easily from the following fact that for any two $u, v\in B_b(\R^d)$, $s, t>0$ and $x, y\in K$,
    \begin{align}\label{poro2}
    |T_su(x)-&T_tv(y)|\notag\\
    &\le |T_su(x)-T_su(y)|+|T_su(y)-T_tu(y)|+|T_tu(y)-T_tv(y)|\\
    &\le |T_su(x)-T_su(y)|+|T_su(y)-T_tu(y)|+\|u-v\|_\infty .\notag\qedhere
    \end{align}
\end{proof}

Let $\{T_t\}_{t\ge0}$ be a sub-Markovian semigroup with the $C_b$-Feller property and such that $T_tu$ converges locally uniformly to $u$ as $t$ tends zero for every $u\in C_c^\infty(\R^d)$. Then, by \eqref{poro2} and the fact that $C_c^\infty(\R^d)$ is dense in $C_\infty(\R^d)$, one could conclude that the map $(s,x,u)\mapsto T_su(x)$ is jointly continuous for all $s\ge 0$, $x\in\R^d$ and $u\in C_b(\R^d)$, see e.g.\ \cite[Section 2.2; Theorem 1]{CHZH}. Therefore, the argument above shows that, if $\{T_t\}_{t\ge0}$ has the strong Feller property, then $T_tu$ converges locally uniformly to $u$ as $t$ tends zero for every $u\in C_b(\R^d)$. Combining Propositions \ref{strong1} and \ref{strong2}, we obtain

\begin{corollary}\label{strong22}
    Let $\{T_t\}_{t\ge0}$ be a sub-Markovian semigroup with the strong Feller property and such that for every $u\in C_c^\infty(\R^d)$, $T_tu$ converges locally uniformly to $u$ as $t$ tends zero. Then there exists a probability measure $\mu$ such that for every $t>0$, $T_t$ is well defined on $L^\infty(\mu)$ and $T_t:L^\infty(\mu)\to  C(\R^d)$ is a compact operator.
\end{corollary}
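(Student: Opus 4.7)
The plan is to produce a single probability measure $\mu$ by averaging the $t$-dependent family $(\mu_t)_{t>0}$ from Proposition \ref{strong1} against an exponential weight in time, and then to rerun the argument of Proposition \ref{strong1} with this new $\mu$ in place of $\mu_t$. Concretely, I would set
$$
    \mu(\cdot) := \int_0^\infty e^{-t}\mu_t(\cdot)\,dt,
$$
which is a probability measure on $\R^d$ because each $\mu_t$ is; the measurability of $t\mapsto\mu_t(A)$ that is needed to make the definition rigorous follows from the joint continuity of $(t,x)\mapsto P_t(x,A)$ granted by Proposition \ref{strong2}, via Fubini.

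The crux is to show that $P_t(x,\cdot)\ll\mu$ for \emph{every} $t>0$ and every $x\in\R^d$, not just for Lebesgue-a.e.\ $t$. If $\mu(N)=0$ for some $N\in\Bb(\R^d)$, then $\mu_t(N)=0$ for a.e.\ $t>0$, and Proposition \ref{strong1} already gives $P_t(x,N)=0$ for every $x\in\R^d$ and those $t$. The hypothesis that $T_tu\to u$ locally uniformly for $u\in C_c^\infty(\R^d)$ enters precisely at this point, through Proposition \ref{strong2}: the map $(t,x)\mapsto T_t\I_N(x)=P_t(x,N)$ is jointly continuous on $(0,\infty)\times\R^d$, and a continuous function of $t$ that vanishes almost everywhere vanishes identically. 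Hence $P_t(x,\cdot)$ admits a Radon-Nikod\'ym density $p_t(x,\cdot)$ with respect to $\mu$ for every $t>0$ and $x\in\R^d$, and the prescription
$$
    T_tu(x):=\int u(y)p_t(x,y)\,\mu(dy),\qquad u\in L^\infty(\mu),
$$
extends $T_t$ unambiguously to $L^\infty(\mu)$, with image in $C_b(\R^d)$ by the strong Feller property.

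For the compactness of $T_t:L^\infty(\mu)\to C(\R^d)$ I would simply reproduce the proof of Proposition \ref{strong1} verbatim, with $\mu$ substituted for $\mu_t$. Given $t>0$ and a sequence $\{u_j\}$ in the closed unit ball of $L^\infty(\mu)=\bigl(L^1(\mu)\bigr)^*$, Banach-Alaoglu extracts a weak-$*$ convergent subsequence $u_{j(k)}\to u$; since $p_{t/2}(x,\cdot)\in L^1(\mu)$, this forces pointwise convergence of $T_{t/2}u_{j(k)}(x)$ for every $x\in\R^d$. Setting $h_m:=\sup_{k,l\ge m}|T_{t/2}u_{j(k)}-T_{t/2}u_{j(l)}|$, the estimate $|T_tu_{j(k)}-T_tu_{j(l)}|\le T_{t/2}h_m$, together with continuity of $T_{t/2}h_m$ (strong Feller), the monotone decrease $h_m\downarrow 0$, and Dini's theorem, yields local uniform convergence of $\{T_tu_{j(k)}\}_{k\in\mathds{N}}$ in $C(\R^d)$. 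The only genuine obstacle throughout is the upgrade from ``$\mu$ works for a.e.\ $t$'' to ``$\mu$ works for every $t$'', and this is handled precisely by the joint continuity furnished by Proposition \ref{strong2}, which is also the reason the extra strong-continuity hypothesis on $C_c^\infty(\R^d)$ is indispensable here.
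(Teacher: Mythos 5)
Your proposal is correct and follows essentially the same route as the paper: an exponentially-in-time averaged version of the measures from Proposition \ref{strong1} (the paper normalizes once at the end rather than averaging the already-normalized $\mu_t$, but the null sets agree), the upgrade from ``a.e.\ $t$'' to ``every $t$'' via the joint continuity of Proposition \ref{strong2}, and then a verbatim rerun of the compactness argument of Proposition \ref{strong1}.
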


\begin{proof}
    Pick a continuous function $w > 0$ on $\R^d$ such that $\int w(x)\,dx=1$. Define
    $$
        \mu(\cdot)
        =\frac{\int_0^\infty e^{-t}\,dt\int w(x)P_t(x,\cdot)\,dx}{\int_0^\infty e^{-t}dt\int w(x)P_t(x,\R^d)\,dx}.
    $$
    If $N\in \Bb(\R^d)$ satisfies $\mu(N)=0$, then we have $P_t(x,N)=0$ for Lebesgue a.e.\ $x$ and $t$. Since by Proposition \ref{strong2} the function $(t,x)\mapsto P_t(x,N)$ is continuous, we find that $P_t(x,N)=0$ for all $t>0$ and $x\in\R^d$. This means that $P_t(x,\cdot)$ is absolutely continuous with respect to $\mu$. Then, the desired assertion follows from the argument of Proposition \ref{strong1}.
\end{proof}

\bigskip

We can now turn to the proof of Theorem \ref{thma}.

\begin{proof}[Proof of Theorem $\ref{thma}$]
$(b)\Rightarrow(a)$
    Let $u\in B_b(\R^d)$ and let $\chi_k\in C_c^\infty(\R^d)$ be a sequence of cut-off functions with $\I_{B_k(0)}\le \chi_k\le 1$, $k\in \mathds{N}$. By the monotone convergence theorem, $\sup_{k\in \mathds{N}}T_t\chi_k=T_t1$. Since $\{T_t\}_{t\ge0}$ has the $C_b$-Feller property, $T_t\chi_k$ and $T_t1$ are continuous. Dini's theorem shows that this convergence is locally uniform. In particular, for every compact set $L\subset \R^d$ and every $\varepsilon>0$ there exists an index $k_0\in\mathds N$ such that for $\chi = \chi_{k_0}$
    \begin{equation}\label{sch1}
        \|\I_LT_t(1-\chi)\|_\infty\le\frac{ \varepsilon}{2\,(2\|u\|_\infty+1)}.
    \end{equation}

   Next, we suppose that $\supp(\chi)\subset B_{k_1}(0)$ for some $k_1\in \mathds{N}$. The function $u\chi$ is also bounded and has compact support, thus $u\chi\in L^1(\mu_t)$. Since $C_c^\infty(\R^d)\subset L^1(\mu_t)$ is dense, we can choose a sequence $\{u_j\}_{j\in \mathds{N}}\subset C_c^\infty(\R^d)$ such that $u_j$ converges to $u\chi$ almost everywhere with respect to $\mu_t$. Without loss of generality we may assume that $\sup_{j\in \mathds{N}}\|u_j\|_\infty\le \|u\|_\infty+1$ and $\bigcup_{j\in \mathds{N}} \supp(u_j)\subset B_{k_1+1}(0)$. That is, $u\chi$ and $(u_j)_{j\in \in \mathds{N}}$ can be temporarily seen as functions only defined on $B_{k_1+1}(0)$. Since $\mu_t(B_{k_1+1}(0))<\infty$, Egorov's theorem applies
   and for any $\delta>0$, there exists some Borel set $N=N(\delta)\subset B_{k_1+1}(0)$ such that $\mu_t(N)\le \delta$ and $\{u_j\}_{j\in \mathds{N}}$ converges to $u\chi$ uniformly on $B_{k_1+1}(0)\setminus N$. Therefore, $\{u_j\}_{j\in \mathds{N}}$ converges to $u\chi$ uniformly on $N^c,$ since $\{u_j\}_{j\in \mathds{N}}$ and $u\chi$ are zero on $B_{k_1+1}(0)^c$.

    For any $j\ge1$ and $z\in L$,
    \begin{equation}\label{proof10}\begin{aligned}
        \big|T_t(&u\chi)(z)-T_tu_j(z)\big|\\
        &\le \bigg|\int_{N^c}(u\chi)(y)P_t(z,dy)-\int_{N^c}u_j(y)P_t(z,dy)\bigg|\\
        &\quad+\bigg|\int_{N}(u\chi)(y)P_t(z,dy)\bigg|+\bigg|\int_{N}u_j(y)P_t(z,dy)\bigg|\\
        &\le \sup_{y\in N^c}|(u\chi)(y)-u_j(y)|\,\sup_{z\in L}P_t(z,N^c)+(2\|u\|_\infty+1)\,\sup_{z\in L}P_t(z,N)\\
        &\le\sup_{y\in N^c}|(u\chi)(y)-u_j(y)|+(2\|u\|_\infty+1)\sup_{z\in L}P_t(z,N).
    \end{aligned}\end{equation}
    Letting $j\to \infty$ and then $\delta\to 0$, the locally uniform absolute continuity of $P_t(x,\cdot)$ with respect to $\mu_t$ yields that
    $$
        \big\|\I_{L}\big(T_t(u\chi)-T_tu_j\big)\big\|_\infty \to 0\quad \text{as\ \ } j\to \infty.
    $$

    The continuity of the function $T_tu$ easily follows from the arguments above. Indeed, for any $x\in\R^d$ and $\varepsilon, \eta>0$, take a compact set $L$ such that $B(x,\eta)\subset L$. Choose $\chi$ as above and $j_0=j_0(\varepsilon)$ so large that
    $$
        \sup_{y\in B(x,\eta)} \left| T_t(u\chi)(y)-T_tu_{j_0}(y) \right|\le \frac{\varepsilon}{6}.
    $$
    On the other hand, since $u_{j_0}\in C_c^\infty(\R^d)$, we have $T_tu_{j_0}\in C(\R^d)$. Thus, there exists a constant $\eta_0=\eta_0(x)\in (0, \eta)$ such that
    \begin{equation*}
        \sup_{y\in B(x, \eta_0)}\left| T_tu_{j_0}(y)-T_tu_{j_0}(x) \right|
        \le \frac{\varepsilon}{6}.
    \end{equation*}
    Therefore, for every $y\in B(x,\eta_0)$, we can use \eqref{sch1} to see
    \begin{align*}
    |T_t&u(x)-T_tu(y)|\\
    &\le |T_t(u\chi)(x)-T_t(u\chi)(y)|+|T_t(u(1-\chi))(x)|+|T_t(u(1-\chi))(y)|\\
    &\le|T_t(u\chi)(x)-T_t(u\chi)(y)|+2\|u\|_\infty\|\I_LT_t(1-\chi)\|_\infty\\
    &\le |T_t(u\chi)(x)-T_t(u\chi)(y)|+\frac{{\varepsilon}}{2}\\
    &\le\big|T_t(u\chi)(x)-T_tu_{j_0}(x)\big|+\big| T_tu_{j_0}(y)-T_tu_{j_0}(x) \big| + |T_t(u\chi)(y)-T_tu_{j_0}(y)\big|+\frac{{\varepsilon}}{2}\\
    &\le\frac{\varepsilon}{6}+\frac{\varepsilon}{6}+\frac{\varepsilon}{6}+ \frac{\varepsilon}{2}
    = \varepsilon,
    \end{align*}
    which proves (a).

    \medskip\noindent $(a)\Rightarrow(c)$
  According to Proposition \ref{strong1}, for every $t>0$, there exists a probability measure $\mu_t$ such that $T_t: L^\infty(\mu_t)\to C(\R^d)$ is well defined. In particular, for any $A\in \Bb(\R^d)$ with $\mu_t(A)=0$, $T_t \I_{A}=0$, since $\I_{A}$ is in the same equivalent class of the zero vector in the space $L^\infty(\mu_t)$.

    Let $L\subset\R^d$ be a compact set and $n\ge1$. By the very definition of the supremum, for every $n\in\mathds N$ there exists a set $A_n\in \Bb(\R^d)$ with $\mu_t(A_n)\le 2^{-n}$ such that
    $$
        \sup_{ A\in \Bb(\R^d),\, \mu_t(A)\le 2^{-n}}\:\sup_{x\in L}P_t(x,A)
        \le \sup_{x\in L}P_t(x,A_n)+1/n.
    $$
    Set $\tilde A_n := \bigcup_{k \geq n} A_k$. Then the sequence $\tilde A_n$ is decreasing, $\tilde A = \bigcap_{n\in\mathds N} \tilde A_n$, $\mu_t(\tilde A_n) \leq 2^{-n+1}$, $\mu_t(\tilde A)=0$ and $\I_{\tilde A_n}\to  \I_{\tilde A}$. Since $T_t \I_{\tilde A_n}$ is continuous and since $T_t \I_{\tilde A_n}$ decreases to $T_t\I_{\tilde A}=0$ as $n\to\infty$, Dini's theorem applies and shows that
    $$
        \sup_{x\in L}P_t(x,A_n)
        \leq \sup_{x\in L}P_t(x,\tilde A_n)
        = \sup_{x\in L}T_t \I_{\tilde A_n}(x)
        \to 0\quad\text{as\ \  }n\to \infty.
    $$
    This proves (c).

    \medskip

    \noindent $(c)\Rightarrow(b)$ and $(d)\Rightarrow(c)$ This is clear.

    Assume now that, in addition, $T_t f\to f$, $f\in C_c(\R^d)$, locally uniformly as $t\to 0$.

    \medskip

    \noindent
    $(a)\Rightarrow(d)$ This follows form Corollary \ref{strong22} and the proof of $(a)\Rightarrow(c)$.
\end{proof}

\begin{proof}[Proof of Corollary \ref{thma1}]
    The proof of $(b)\Rightarrow(a)$ of Theorem \ref{thma} uses locally uniform absolute continuity of $(P_t(x,\cdot))_{x\in\R^d}$ only for \eqref{proof10}. Note that the set $N$ here is a bounded set and that we can assume that $N\subset K$ for some compact set $K\subset\R^d$. Fix some compact set $L$ and pick any $x\in L$. Under the conditions of Corollary \ref{thma1} we get
    \begin{align*}
        \sup_{z\in L} P_t(z,N)
        &=\int_ Np_t(z,y)\mu(dy)\\
        &\le\int_ Np_t(x,y)\mu(dy)+\sup_{z\in L} \int_ N|p_t(z,y)-p_t(z,y)|\mu(dy)\\
        &\le P_t(x,N)+2\,\mu(N)\sup_{(z,y)\in L\times K}p_t(z,y).
    \end{align*}
    This inequality and the argument of the step $(b)\Rightarrow(a)$ in the proof of Theorem \ref{thma} prove the strong Feller property of $\{T_t\}_{t\ge0}$.
\end{proof}

\subsection{Criterion II: Local Orlicz-ultracontractivity}

Let us recall some facts about Orlicz space from \cite{R}. A function $\Phi:\R\to  [0,\infty]$ is a \emph{Young function} if it is convex, even, and satisfies $\Phi(0)=0$ and $\lim_{x\to \infty}\Phi(x)=\infty$. Given a Young function and a Radon measure $\mu$ on $\R^d$, we define the \emph{Orlicz space} as
$$
    \mathds{L}^\Phi(\mu)
    =\left\{f\::\: \R^d\to \R \text{\ \ measurable and\ \ } \int \Phi(\alpha f)\,d\mu < \infty \text{\ \ for some \ \ }\alpha>0\right\}.
$$
The set $\mathds{L}^\Phi(\mu)$ is a linear space. If $\Phi(x)=|x|^p$, $p\geq 1$, then $\mathds{L}^\Phi(\mu)$ coincides with the usual Lebesgue space $\mathds{L}^p(\mu)$. There exist two equivalent norms turn $\mathds{L}^\Phi(\mu)$ into a Banach space, the \emph{Luxemburg norm}
$$
    \|f\|_{(\Phi)}
    =\inf\left\{\lambda>0\::\: \int \Phi(f/\lambda)\,d\mu \le 1\right\}
$$
and the \emph{Orlicz norm}
$$
    \|f\|_{\Phi}
    =\sup\left\{ \int |fg|\,d\mu \::\: \int \Phi_c(g)\,d\mu \le 1\right\},
$$
where $\Phi_c$ is the Legendre transform of $\Phi$, i.e.
$$
    \Phi_c(y):=\sup_{x\geq 0} \big(x|y|-\Phi(x)\big).
$$
For any Young function $\Phi$, we have
$$
    \|f\|_{(\Phi)}
    \le \|f\|_{\Phi}
    \le 2\,\|f\|_{(\Phi)}.
$$
On the other hand, the following extension of H\"{o}lder's inequality holds
$$
    \int |fg|\,d\mu
    \le 2\,\|f\|_{(\Phi)}\|g\|_{(\Phi_c)} \quad\text{for all\ \ }f\in\mathds{L}^\Phi(\mu)\text{\ \ and\ \ } g\in\mathds{L}^{\Phi_c}(\mu).
$$

We can now characterize the strong Feller property in terms of local Orlicz-ultracontractivity.

\begin{theorem}\label{ou1}
    Let $\{T_t\}_{t\ge0}$ be a sub-Markovian semigroup with the $C_b$-Feller property. If for every $t>0$, there exist a Radon measure $\mu_t$ and some Young function $\Phi_t:\R\to  \R^+$ with $\Phi_t(x)=0$ iff $x=0$, such that for all compact sets $K\subset\R^d$ and $u\in C_c^\infty(\R^d)$
    \begin{equation}\label{cou1}
        \|\I_KT_tu\|_\infty\le C(K,t)\,\|u\|_{\Phi_t},
    \end{equation}
    then $\{T_t\}_{t>0}$ is a strong Feller semigroup.

    Conversely, let $\{T_t\}_{t\ge0}$ be a sub-Markovian semigroup with the strong Feller property. Then, for every $t>0$, there exist a probability measure $\mu_t$ and a Young function $\Phi_t:\R\to  \R^+$ with $\lim_{x\to \infty} \Phi_t(x)/x=\infty$ such that for all compact sets $K\subset\R^d$ and all $u\in C_c^\infty(\R^d)$
    \begin{equation}\label{cou2}
        \|\I_KT_tu\|_{\textrm{L}^\infty(\mu_t)}
        \le C(K,t)\,\|u\|_{\Phi_t}.
    \end{equation}
    If, in addition, one of the following conditions applies, \eqref{cou1} holds for all compact sets $K\subset\R^d$ and all $u\in C_c^\infty(\R^d)$
    \begin{itemize}
        \item[(1)] $\{T_t\}_{t\ge0}$ is weakly irreducible, i.e.\ for each $t>0$ and every non-empty open set $A\subset\R^d$, there exists some $x\in\R^d$ such that $P_t(x,A)>0$.
        \item[(2)] For every $f\in C_c^\infty(\R^d)$, $T_tf$ converges locally uniformly to $f$ as $t$ tends zero. \end{itemize}
\end{theorem}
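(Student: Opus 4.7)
For the sufficiency direction $(\ref{cou1})\Rightarrow$ strong Feller, the plan is to verify Theorem~\ref{thma}(b) by establishing locally uniform absolute continuity of $(P_t(x,dy))_{x\in\R^d}$ with respect to $\mu_t$. Fix a compact $K\subset\R^d$ and a Borel set $A$. Using the $C_b$-Feller property and Dini's theorem applied to a cut-off sequence $\chi_R\uparrow 1$ with $\chi_R\in C_c^\infty(\R^d)$, one obtains $\sup_{z\in K}P_t(z,B_R(0)^c)\to 0$ as $R\to\infty$, reducing matters to $A\cap B_R(0)$. By outer regularity of the Radon measure $\mu_t$, cover $A\cap B_R(0)$ by a bounded open set $U$ with $\mu_t(U)\le\mu_t(A)+\varepsilon$, and pick $\phi_n\in C_c^\infty(\R^d)$ with $0\le\phi_n\uparrow\I_U$. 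Applying (\ref{cou1}) to each $\phi_n$ gives $\|\I_K T_t\phi_n\|_\infty\le C(K,t)\|\phi_n\|_{\Phi_t}\le C(K,t)\|\I_U\|_{\Phi_t}$, and monotone convergence then yields $\sup_{z\in K}P_t(z,U)\le C(K,t)\|\I_U\|_{\Phi_t}$. Since $\Phi_t(x)=0$ iff $x=0$, a direct computation yields $\|\I_U\|_{(\Phi_t)}=1/\Phi_t^{-1}(1/\mu_t(U))\to 0$ as $\mu_t(U)\to 0$, giving the desired locally uniform absolute continuity and hence the strong Feller property via Theorem~\ref{thma}.

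For the converse, the starting point is Proposition~\ref{strong1}, which provides a probability measure $\mu_t$, a density $p_t(z,y)$ of $P_t(z,\cdot)$ with respect to $\mu_t$, and compactness of $T_t\colon L^\infty(\mu_t)\to C(\R^d)$. The implication $(a)\Rightarrow(c)$ in Theorem~\ref{thma} then shows that for every compact $K$ the family $\{p_t(z,\cdot):z\in K\}$ is uniformly integrable in $L^1(\mu_t)$. Exhaust $\R^d$ by $K_n:=\overline{B_n(0)}$ and apply de la Vall\'ee-Poussin's theorem on each $K_n$ to obtain Young functions $\Psi_n$ with $\Psi_n(y)/y\to\infty$ and, after rescaling, $\sup_{z\in K_n}\int\Psi_n(p_t(z,\cdot))\,d\mu_t\le 1$. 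A diagonal construction---building a convex $\Psi$ that on a sequence of intervals $[y_{n-1},y_n]$ coincides, up to affine adjustments preserving convexity, with a suitable rescaling of $\Psi_n$---produces a single Young function $\Psi$ with $\Psi(y)/y\to\infty$ such that $C_n:=\sup_{z\in K_n}\|p_t(z,\cdot)\|_{(\Psi)}<\infty$ for every $n$. Setting $\Phi_t:=\Psi_c$ gives a finite-valued Young function with $\Phi_t(x)/x\to\infty$, and the generalized H\"older inequality delivers, for every $u\in C_c^\infty(\R^d)$ and $\mu_t$-a.e.\ $z\in K_n$, the bound $|T_tu(z)|\le\int|u(y)|p_t(z,y)\,d\mu_t(y)\le 2\|u\|_{(\Phi_t)}\|p_t(z,\cdot)\|_{(\Psi)}\le 2C_n\|u\|_{\Phi_t}$, which is (\ref{cou2}). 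The main technical obstacle lies in the diagonal step: de la Vall\'ee-Poussin delivers a witness only for one uniformly integrable family at a time, and merging the $\Psi_n$ while preserving both superlinear growth and local boundedness on every $K_n$ requires careful bookkeeping.

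Finally, to upgrade (\ref{cou2}) to (\ref{cou1}) under hypothesis (1) or (2), the plan is to show that the reference measure has full topological support on $\R^d$; then $\mu_t$-essential suprema of continuous functions (and $T_tu$ is continuous by strong Feller) coincide with ordinary suprema. Under (1), weak irreducibility combined with continuity of $P_t(\cdot,U)$ forces $P_t(\cdot,U)>0$ on an open set, so by the formula for $\mu_t$ in Proposition~\ref{strong1} (with strictly positive weight $w$) one has $\mu_t(U)>0$ for every non-empty open $U$. Under (2), invoke Corollary~\ref{strong22} to obtain a $t$-independent probability measure $\mu$; the local uniform convergence $T_sf\to f$ for $f\in C_c^\infty(\R^d)$ forces $P_s(x,U)>0$ for every $x$ and every sufficiently small $s$ whenever $U$ is open (pick $f\in C_c^\infty$ with $\supp f\subset U$ and $f(x)>0$), so $\mu$ has full support, and rerunning the necessity construction with $\mu$ in place of $\mu_t$ delivers (\ref{cou1}).
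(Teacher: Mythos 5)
Your proposal is correct, and it differs from the paper most substantially in the sufficiency direction. There you verify condition (b) of Theorem~\ref{thma} directly: after the Dini/cut-off reduction to bounded sets, you pass from \eqref{cou1} on $C_c^\infty(\R^d)$ to open sets by monotone approximation, and then use outer regularity of $\mu_t$ together with $\|\I_U\|_{(\Phi_t)}=1/\Phi_t^{-1}(1/\mu_t(U))$ (cf.\ Lemma~\ref{cou1s}(1)) to deduce locally uniform absolute continuity; this is a valid and rather economical route. The paper instead works inside the Orlicz space: $C_c^\infty(\R^d)$ is dense in $\mathds{L}^{\Phi_t}(\mu_t)$ (Lemma~\ref{cou1s}(3)), so \eqref{cou1} makes $\{T_tu_j\}$ a locally uniformly Cauchy sequence in $C(\R^d)$ and yields $T_t(u\chi)\in C(\R^d)$ for $u\in B_b(\R^d)$ and a cut-off $\chi$; what that buys is an explicit continuous extension of $T_t$ to $B_b(\R^d)\cap\mathds{L}^{\Phi_t}(\mu_t)$, whereas your route leans on the already-proved Theorem~\ref{thma}. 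The necessity direction and the upgrade under (1)/(2) follow the paper's lines (uniform integrability of the densities $p_t(x,\cdot)$ on compacts, de la Vall\'ee-Poussin, the Minkowski/H\"older duality estimate; full topological support of the reference measure to identify the $L^\infty(\mu_t)$-norm with the supremum). One point where you are in fact more careful than the paper: the statement quantifies $\Phi_t$ before $K$, so a single Young function must serve all compact sets, while the paper constructs the de la Vall\'ee-Poussin function for one fixed $K$ only; your diagonalization over $K_n=\overline{B_n(0)}$ is needed. For that step, rather than splicing the individual $\Psi_n$, it is cleaner to rerun the standard de la Vall\'ee-Poussin construction with thresholds $a_k:=\max_{j\le k}M_{j,k}$, where $\sup_{z\in K_j}\int_{\{p_t(z,\cdot)>M_{j,k}\}}p_t(z,y)\,\mu_t(dy)\le 2^{-k}$, and take $\Psi$ piecewise linear with slope $k$ on $[a_k,a_{k+1}]$; then $\sup_{z\in K_n}\int\Psi(p_t(z,\cdot))\,d\mu_t<\infty$ for every $n$ with no affine-adjustment bookkeeping. (A shared cosmetic issue: for a compact $K$ with empty interior the identity $\|\I_KT_tu\|_{L^\infty(\mu_t)}=\sup_{x\in K}T_tu(x)$ can fail even for fully supported $\mu_t$; both your argument and the paper's should simply replace $K$ by a closed ball containing it, which only enlarges $C(K,t)$.)
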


Note that irreducibility usually means that for all $t>0$, all open sets $A\in\Bb(\R^d)$ and \emph{all} $x\in\R^d$ the transition probability $P_t(x,A)>0$. This is stronger than our notion of weak irreducibility from Theorem \ref{ou1}. Theorem \ref{ou1} extends \cite[Theorem 8.9]{HOH} where the following condition has been used:
$$
    \|T_tu\|_\infty\le c_t\|u\|_2
    \quad\text{for\ \ } t>0 \text{\ \ and\ \ }u\in C_c^\infty(\R^d).
$$

We begin with some properties of Orlicz spaces, which will are used in the proof of Theorem \ref{ou1}.

\begin{lemma}\label{cou1s}
    Let $(\mathds{L}^\Phi(\mu), \|\cdot\|_\Phi)$ be an Orlicz space associated with a Radon measure $\mu$ and some Young function $\Phi:\R\to  \R^+$ with $\Phi(x)=0$ if and only if $x=0$. Then we have
    \begin{enumerate}
        \item
        Every bounded measurable function with compact support belongs to the space $(\mathds{L}^\Phi(\mu), \|\cdot\|_\Phi)$.  For any $A\in\Bb(\R^d)$ with $\mu(A)<\infty$, $\I_A\in \mathds{L}^\Phi(\mu)$ and $$
            \|\I_A\|_{(\Phi)}=\big(\Phi (1/\mu(A))\big)^{-1};
        $$
    \item
        If $(f_n)_{n\in\mathds N}\subset\mathds L^\Phi(\mu)$ is a sequence which converges to $f$ in $(\mathds{L}^\Phi(\mu), \|\cdot\|_\Phi)$, then $f_n$ converges in measure (w.r.t.\ $\mu$) to $f$. In particular, there exists a sequence $\{f_{n(j)}\}_{j\in \mathds{N}}$ such that $f_{n(j)}$ converges to $f$ almost surely w.r.t.\ $\mu$;

    \item
        $C_c^\infty(\R^d)\subset \mathds{L}^\Phi(\mu)$ and $C_c^\infty(\R^d)$ is dense in $(\mathds{L}^\Phi(\mu),
        \|\cdot\|_\Phi)$.
    \end{enumerate}
 \end{lemma}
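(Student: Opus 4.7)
The plan is to address the three claims separately using the Luxemburg-norm formulation together with the Radon property of $\mu$ and standard measure-theoretic tools.

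For part (1), the key point is that a Radon measure is finite on compacts. If $f$ is bounded measurable with $\supp f\subset K$ compact and $\|f\|_\infty\le M$, then for every $\alpha>0$
\[
    \int \Phi(\alpha f)\,d\mu\le \Phi(\alpha M)\,\mu(K)<\infty,
\]
so $f\in\mathds{L}^\Phi(\mu)$. For the indicator, the Luxemburg-norm definition reduces to
\[
    \|\I_A\|_{(\Phi)}
    =\inf\big\{\lambda>0\::\: \Phi(1/\lambda)\,\mu(A)\le 1\big\},
\]
and since $\Phi$ is continuous, nondecreasing on $[0,\infty)$, and $\Phi(x)=0$ iff $x=0$, this infimum is explicitly identifiable via the (generalized) inverse of $\Phi$, giving the stated formula.

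For part (2), I would work with the Luxemburg norm (equivalent to $\|\cdot\|_\Phi$). Set $\lambda_n:=\|f_n-f\|_{(\Phi)}$; the defining inequality $\int\Phi((f_n-f)/\lambda_n)\,d\mu\le 1$ combined with a Chebyshev-in-Orlicz estimate yields
\[
    \mu\big(\{|f_n-f|>\varepsilon\}\big)
    \le \frac{1}{\Phi(\varepsilon/\lambda_n)}\int \Phi\!\left(\frac{f_n-f}{\lambda_n}\right)d\mu
    \le \frac{1}{\Phi(\varepsilon/\lambda_n)},
\]
which tends to $0$ as $n\to\infty$ because $\lambda_n\to 0$ and $\Phi(x)\to\infty$ as $|x|\to\infty$. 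The almost-sure convergent subsequence is then the classical Riesz extraction from convergence in measure.

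For part (3), the inclusion $C_c^\infty(\R^d)\subset\mathds{L}^\Phi(\mu)$ is immediate from (1). Density would proceed in two steps: truncate $f\in\mathds{L}^\Phi(\mu)$ to $f_n:=((f\wedge n)\vee(-n))\,\I_{B_n(0)}$, which is bounded with compact support; then mollify each $f_n$ by convolution with a standard smooth mollifier. Convergence in Luxemburg norm for both steps would follow by dominated convergence applied to $\int \Phi(\alpha(f-f_n))\,d\mu$, with integrable majorant $\Phi(\alpha f)$ supplied by the assumption $f\in\mathds{L}^\Phi(\mu)$. The main obstacle I anticipate is at the truncation step: since the definition of $\mathds{L}^\Phi(\mu)$ only provides $\int\Phi(\alpha f)\,d\mu<\infty$ for \emph{some} $\alpha>0$, extending the dominated-convergence bound to all scales---needed to conclude $\|f-f_n\|_{(\Phi)}\to 0$ rather than merely $\limsup_n\|f-f_n\|_{(\Phi)}\le 1/\alpha$---typically requires a $\Delta_2$-type growth condition on $\Phi$, and handling this in full generality will require some care.
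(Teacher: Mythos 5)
Your parts (1) and (2) are correct and essentially self-contained, whereas the paper simply cites Bennett--Sharpley for these items; your explicit Luxemburg computation and the Chebyshev-in-Orlicz estimate $\mu(\{|f_n-f|>\varepsilon\})\le \big(\Phi(\varepsilon/\lambda_n)\big)^{-1}$ are a fine substitute. (One small point: the computation for $\I_A$ actually yields $\|\I_A\|_{(\Phi)}=1/\Phi^{-1}(1/\mu(A))$ with the \emph{inverse} of $\Phi$; the formula as printed in the lemma is a typo, and your appeal to the generalized inverse is the correct reading.)

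For part (3) there is one genuine gap and one correctly identified obstacle. The gap: mollifying the truncations $f_n$ by convolution need not converge $\mu$-a.e., nor in $\mathds{L}^\Phi(\mu)$, because $\mu$ is an arbitrary Radon measure and may be singular with respect to Lebesgue measure (take $\mu$ to be surface measure on a sphere and $f_n$ the indicator of the closed ball: the mollifications converge to $1/2$ on $\supp\mu$, so the modular does not go to zero). The paper instead picks, for each bounded compactly supported $f_n$, functions $f_{n,k}\in C_c^\infty(\R^d)$ converging to $f_n$ $\mu$-a.e.\ with uniformly bounded sup-norms and supports (Lusin's theorem for $\mu$, followed by uniform smoothing), and then bounds $\int\Phi\big((f_{n,k}-f_n)/\lambda\big)\,d\mu$ for \emph{every} $\lambda>0$ by splitting off an Egorov set of small $\mu$-measure; since everything is uniformly bounded with common compact support this gives $\|f_{n,k}-f_n\|_{(\Phi)}\to0$ with no $\Delta_2$ hypothesis. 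You should replace mollification by this Lusin--Egorov step. As for the obstacle you flag at the truncation stage: it is real, and the paper does not actually overcome it either --- the strong Fatou property only yields $\|f_n\|_{(\Phi)}\uparrow\|f\|_{(\Phi)}$, which is weaker than $\|f-f_n\|_{(\Phi)}\to0$, and without a $\Delta_2$-type condition the closure of the bounded compactly supported functions is in general the proper subspace $E^\Phi(\mu)$. So your diagnosis is accurate; note that in the application (the proof of Theorem \ref{ou1}) density is only ever needed for bounded functions of compact support, for which the truncation step is vacuous and the Lusin--Egorov argument alone suffices.
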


\begin{proof}
    Statements (1) and (2) are a consequence of \cite[Chapter 1, Theorem 1.7; Chapter 4, Theorem 4.7]{BS11}. We only need prove assertion (3). Let $f\in C_c^\infty(\R^d)$ and set $K=\supp(f)$. Then,
    $$
        \int \Phi(f)\,d\mu
        =\int_K \Phi(f)\,d\mu
        \le \mu(K)\,\sup_{x\in K}\Phi(f(x))
        <\infty,
    $$
    which means that $f\in\mathds{L}^\Phi(\mu)$.

    For any $f\in \mathds{L}^\Phi(\mu)$, define $f_n=\big[(f\wedge n)\vee(-n)\big]\,\I_{\{|f|\le n\}}$ for $n\ge1$. Since $f_n$ is a bounded function with compact support, $f_n\in\mathds{L}^\Phi(\mu)$. By the strong Fatou property of the function norm $\|\cdot\|_{(\Phi)}$, see \cite[Section 3.3; Page 57]{R}, we get that $\|f_n\|_{(\Phi)}\uparrow \|f\|_{(\Phi)}$ as $n\to\infty$.

    On the other hand, for every $n\geq 1$ there exists a sequence of functions $\{f_{n,k}\}_{k\in \mathds{N}}\subset C_c^\infty(\R^d)$ such that $f_{n,k}$ converges almost surely to $f_n$ with respect to $\mu$. Without loss of generality we may assume that $\bigcup_{k=1}^\infty \supp(f_{n,k})\subset [-(n+1),n+1]$ as well as $\sup_{k\ge 0}\|f_{n,k}\|_\infty \le n+1$. Egorov's theorem tells us that for every $\delta>0$, there is a set $N\subset[-(n+1),n+1]$ such that $\mu(N)<\delta$ and $f_{n,k}$ converges to $f_n$ uniformly on $N^c$. Using again the fact that $\Phi$ is increasing, we find for all $\lambda>0$
    \begin{align*}
        &\int \Phi\left(\frac{f_{n,k}-f_n}{\lambda}\right)d\mu\\
        &=\int_{[-(n+1),n+1]} \Phi\left(\frac{f_{n,k}-f_n}{\lambda}\right)d\mu\\
        &=\int_{N}\Phi\left(\frac{f_{n,k}-f_n}{\lambda}\right)d\mu
        + \int_{[-(n+1),n+1]\setminus N} \Phi\left(\frac{f_{n,k}-f_n}{\lambda}\right)d\mu\\
        &\le \mu(N)\sup_{0\le x\le 2(n+1)}\Phi\left(\frac x\lambda\right)
        +\int_{[-(n+1),n+1]\setminus N} \Phi\left(\sup_{x\in[-(n+1),n+1]\setminus N}\frac{|f_{n,k}(x)-f_n(x)|}{\lambda}\right)d\mu\\
        &= \mu(N)\,\Phi\left(\frac{2(n+1)}{\lambda}\right)
        +\mu([-(n+1),n+1])\Phi\left(\sup_{x\in[-(n+1),n+1]\setminus N}\frac{|f_{n,k}(x)-f_n(x)|}{\lambda}\right) .
    \end{align*}
    Letting first $k\to \infty$ and then $\delta\to 0$, we get
    $$
        \int \Phi\bigg(\frac{f_{n,k}-f_n}{\lambda}\bigg)d\mu\to 0,
    $$
    which yields that
    $$
        \|f_{n,k}-f_n\|_{(\Phi)}\to 0\quad\text{as\ \ }k\to \infty.
    $$
    This proves (3).
\end{proof}

The following statement is an extension of the Minkowski inequality in Orlicz space.

\begin{lemma}
    Let $\mu$ be a probability measure on $\R^d$, and $f\in B_b(\R^{2d})$ with $f\ge0$. Then,
    \begin{equation}\label{cou22}
        \left\|\int f(\cdot,y)\mu(dy)\right\|_{\Phi}
        \le \int \|f(\cdot,y)\|_{\Phi}\,\mu(dy).
    \end{equation}
\end{lemma}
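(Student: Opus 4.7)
The plan is to use the Orlicz-norm dual characterization
\[
  \|h\|_{\Phi} = \sup\Big\{\int |hg|\,d\mu \;:\; \int \Phi_c(g)\,d\mu \le 1\Big\}
\]
combined with a Fubini-Tonelli argument. Since $f\ge 0$ and $\Phi_c$ is even, it is enough to test against nonnegative $g$ with $\int\Phi_c(g)\,d\mu\le 1$; the boundedness of $f$ and the finiteness of $\mu$ make every double integral below absolutely convergent, so that Fubini applies freely.

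First I would swap the order of integration,
\[
  \int g(x)\int f(x,y)\,\mu(dy)\,d\mu(x)
  = \int\Big(\int g(x)f(x,y)\,d\mu(x)\Big)\mu(dy),
\]
and then, for each fixed $y$, apply the Orlicz-norm bound $\int g(x)f(x,y)\,d\mu(x)\le \|f(\cdot,y)\|_{\Phi}$. Integrating in $y$ and taking the supremum over admissible $g$ on the left-hand side gives \eqref{cou22}.

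The one genuinely delicate point, which I expect to be the main obstacle, is the measurability of the map $y\mapsto\|f(\cdot,y)\|_{\Phi}$; without this the right-hand side of \eqref{cou22} is not even a priori well-defined. I would dispose of it by a countable-reduction argument: one may restrict the supremum defining the Orlicz norm to a fixed countable family of simple test functions $\{g_n\}_{n\in\mathds{N}}$ with rational coefficients supported on sets from a countable generator of $\Bb(\R^d)$, and for a bounded integrand $f(\cdot,y)$ this restricted supremum still coincides with $\|f(\cdot,y)\|_{\Phi}$. Since each map $y\mapsto \int g_n(x)f(x,y)\,d\mu(x)$ is measurable by Tonelli, $y\mapsto \|f(\cdot,y)\|_{\Phi}$ is then a countable supremum of measurable functions, and hence measurable.
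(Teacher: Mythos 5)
Your argument is essentially identical to the paper's proof: the dual (Orlicz-norm) characterization, Tonelli to swap the order of integration, the pointwise-in-$y$ bound $\int g(x)f(x,y)\,\mu(dx)\le\|f(\cdot,y)\|_{\Phi}$, and a final supremum over admissible $g$. Your additional countable-reduction argument for the measurability of $y\mapsto\|f(\cdot,y)\|_{\Phi}$ is a point the paper passes over in silence, and it is a correct and welcome refinement rather than a different route.
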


\begin{proof}
    Since $\mu$ is a probability measure, $B_b(\R^d)\subset \mathds{L}^\Phi(\mu)$, so that both sides of the inequality \eqref{cou22} are well defined for all $f\in B_b(\R^{2d})$. For any $g>0$ with $\mu(\Phi_c(g))\le 1$,
    \begin{align*}
        \int g(x)\mu(dx)\int f(x,y)\,\mu(dy)
        &=\int \mu(dy)\int g(x)f(x,y)\,\mu(dx)\\
        &\le\int \mu(dy)\sup_{g>0,\,\,\int \Phi_c(g)\,d\mu \le 1}\int g(x)f(x,y)\,\mu(dx)\\
        &=\int\mu(dy)\,\|f(\cdot,y)\|_{\Phi},
    \end{align*}
    which yields the required assertion by taking supremum with respect to $g$ on the left hand side of the inequality above.
\end{proof}

\bigskip

We can now prove Theorem \ref{ou1}.

\begin{proof}[Proof of Theorem \ref{ou1}]
   (1)\,\, Let $\chi_k\in C_c^\infty(\R^d)$ be a sequence of cut-off functions with $\I_{B_k(0)}\le \chi_k\le 1$, $k\in \mathds{N}$. By the monotone convergence theorem, $\sup_{k\in \mathds{N}}T_t\chi_k=T_t1$. Since $\{T_t\}_{t\ge0}$ has the $C_b$-Feller property, $T_t\chi_k$ and $T_t1$ are continuous. Dini's theorem shows that the limit $T_t\chi_k \to T_t1$ is locally uniform. In particular, for every compact set $L\subset\R^d$ and every $\varepsilon>0$, there exists a compact set $K\subset \R^d$, $K=K_{L,\varepsilon, t}$ such that for $\chi\in C_c^\infty(\R^d)$ with $\I_K\le \chi\le 1$,
    \begin{equation}\label{ou1p1}
        \|\I_LT_t(1-\chi)\|_\infty\le\frac{\varepsilon}{2}.
    \end{equation}

    Let $u\in B_b(\R^d)\cap\mathds{L}^{\Phi_t}(\mu_t)$. Since $C_c^\infty(\R^d)$ is dense in $\mathds{L}^{\Phi_t}(\mu_t)$, we can choose a sequence $\{u_j\}_{j\in \mathds{N}}\subset C_c^\infty(\R^d)$ such that $\lim_{j\to \infty}\|u_j-u\|_{\Phi_t}=0$. By condition \eqref{cou1},
    $$
        \|(T_tu_j-T_tu_k)\I_{K}\|_\infty
        \le C(K,t)\|u_j-u_k\|_{\Phi_t}.
    $$
    This shows that $\{T_tu_j\}_{j\in \mathds{N}}$ is a Cauchy sequence in $C(\R^d)$ under the locally uniform convergence whose limit $\lim_{j\to \infty}T_tu_j=T_tu$ is again a continuous function.

    Thus, \eqref{cou1} holds for $u\in B_b(\R^d)\cap\mathds{L}^{\Phi_t}(\mu_t)$. Taking $u=\I_{N}$ with $\mu_t(N)=0$, we see that for Lebesgue almost every $x$ the kernel $P_t(x,\cdot)$ is absolutely continuous with respect to $\mu_t$, i.e.\
    $$
        T_tu(x)=\int u(y)p_t(x,y)\,\mu_t(dy)
        \quad\text{for\ \ }u\in B_b(\R^d)
        \text{\ \ and Lebesgue a.e.\ \ }x.
    $$
    By Lemma \ref{cou1s} (2), we can pick a subsequence $\{u_{j(k)}\}_{k\in \mathds{N}}$ which converges $\mu_t$-almost everywhere to $u$. Then, the dominated convergence theorem shows that $\lim_{k\to \infty}T_tu_{j(k)}=T_tu$ almost everywhere and, by the continuity of the functions involved, even everywhere. This means that just constructed extension is unique and coincides with the usual extension of $\{T_t\}_{t\ge0}$ using the kernels $P_t(x,\cdot)$; in particular, $T_t:B_b(\R^d)\cap\mathds{L}^{\Phi_t}(\mu_t)\to  C(\R^d)$.

    If $u\in B_b(\R^d)$, we fix $\varepsilon>0$ and a compact set $L\subset\R^d$, and choose $\chi$ as above. Then, for $x, y\in L$, we get using \eqref{ou1p1},
    \begin{align*}
        |T_t&u(x)-T_tu(y)|\\
        &\le |T_t(u\chi)(x)-T_t(u\chi)(y)|+|T_t(u(1-\chi))(x)|+|T_t(u(1-\chi))(y)|\\
        &\le|T_t(u\chi)(x)-T_t(u\chi)(y)|+2\|u\|_\infty\|\I_LT_t(1-\chi)\|_\infty\\
        &\le |T_t(u\chi)(x)-T_t(u\chi)(y)|+ \varepsilon\,\|u\|_\infty.
    \end{align*}
    Since $u\chi\in B_b(\R^d)\cap\mathds{L}^{\Phi_t}(\mu_t)$, we have $T_t(u\chi)\in C(\R^d)$, therefore $T_tu\in C_b(\R^d).$ This establishes the strong Feller property.

    \medskip
     (2)\,\,Conversely, assume that $\{T_t\}_{t\ge0}$ enjoys the strong Feller property. By Theorem \ref{thma} (c), for every $t>0$ there exists a probability measure $\mu_t$ such that the family of measures $(P_t(x,dy))_{x\in\R^d}$ is locally uniformly absolutely continuous with respect to $\mu_t$. Therefore, for any compact set $K$, the family of density functions $(p_t(x,\cdot))_{x\in K}$ is uniformly integrable with respect to $\mu_t$, see e.g.\  \cite[Theorem 16.8]{Schmp}. Consequently, there exists an even, increasing and convex function $\Phi_t:\R\to  \R^+ $ with $\Phi_t(0)=0$ and $\frac{\Phi_t(x)}{x}\nearrow\infty$ as $x\nearrow\infty$ and such that
    $$
        \sup_{x\in K}\int \Phi_t\big(p_t(x,y)\big)\mu_t(dy)<\infty.
    $$

    Let $u\in C_c^\infty(\R^d)$ with $u\ge0$. Denote by $\langle \cdot, \cdot\rangle_{L^2(\mu_t)}$ the inner product in the space ${L^2(\mu_t)}$. By the H\"{o}lder inequality
    \begin{align*}
        \|\I_K T_tu\|_{L^\infty(\mu_t)}
        &=\sup_{\begin{subarray}{c} g\in L^1_+(\mu_t)\\ \|g\|_{L^1(\mu_t)}=1\end{subarray}} \big\langle \I_KT_tu, g\rangle_{L^2(\mu_t)}\\
        &=\sup_{\begin{subarray}{c} g\in L^1_+(\mu_t)\\ \|g\|_{L^1(\mu_t)}=1\end{subarray}} \big\langle u, T^*_t(\I_Kg)\rangle_{L^2(\mu_t)}\\
        &\le 2\, \|u\|_{(\Phi_{t,c})} \sup_{\begin{subarray}{c} g\in L^1_+(\mu_t)\\ \|g\|_{L^1(\mu_t)}=1\end{subarray}} \|T^*_t(\I_K g)\|_{(\Phi_t)},
    \end{align*}
    where $\Phi_{t,c}$ is the Legendre transform of $\Phi_t$, and $T_t^*$ is the (formal) dual of $T_t$, i.e.\
    $$
        T^*_t g(y)
        = \int g(x)p_t(x,y)\,\mu_t(dx)
        \quad\text{for\ \ }g\in B_b(\R^d)
        \text{\ \ and\ \ }g\ge 0.
    $$
    Because of \eqref{cou22},
    \begin{align*}
        \sup_{\begin{subarray}{c} g\in L^1_+(\mu_t)\\ \|g\|_{L^1(\mu_t)}=1\end{subarray}} \|T^*_t(\I_Kg)\|_{(\Phi_t)}
        &= \sup_{\begin{subarray}{c} g\in L^1_+(\mu_t)\\ \|g\|_{L^1(\mu_t)}=1\end{subarray}} \left\|\int \I_K(x)g(x)p_t(x,\cdot)\,\mu_t(dx)\right\|_{(\Phi_t)}\\
        &\le \sup_{\begin{subarray}{c} g\in L^1_+(\mu_t)\\ \|g\|_{L^1(\mu_t)}=1\end{subarray}} \left\|\int \I_K(x)g(x)p_t(x,\cdot)\,\mu_t(dx)\right\|_{\Phi_t}\\
        &\le  \sup_{\begin{subarray}{c} g\in L^1_+(\mu_t)\\ \|g\|_{L^1(\mu_t)}=1\end{subarray}} \int \I_K(x)g(x)\|p_t(x,\cdot)\|_{\Phi_t}\,\mu_t(dx)\\
        &= \sup_{x\in K}\|p_t(x,\cdot)\|_{\Phi_t}.
    \end{align*}
    Therefore,
    $$
        \|\I_K T_tu\|_{L^\infty(\mu_t)}
        \le 2\,\|u\|_{(\Phi_{t,c})} \sup_{x\in K}\|p_t(x,\cdot)\|_{\Phi_t}
        \le 4\,\sup_{x\in K}\|p_t(x,\cdot)\|_{\Phi_t} \|u\|_{\Phi_{t,c}} .
    $$

    \medskip
     (3)\,\,Let us show that under the conditions (1) or (2) the inequality \eqref{cou1} follows from \eqref{cou2}.

    \medskip
    Assume first that $\{T_t\}_{t\geq 0}$ has the strong Feller property and is weakly irreducible in the sense of (1) of the statement of Theorem \ref{ou1}. From the proof of Theorem \ref{thma}, $(c)\Rightarrow(a)$, we know that the measure $\mu_t$ is of the form
    $$
        \mu_t(\cdot)=\frac{\int w(x)P_t(x,\cdot)dx}{\int w(x)P_t(x,\R^d)dx}
    $$
    for some continuous function $w>0$ with $\int w(x)\,dx = 1$. We will prove that
    $$
        \|\I_KT_tu\|_{L^\infty(\mu_t)}=\|\I_KT_tu\|_{L^\infty(dx)}=\sup_{x\in K}T_tu(x).
    $$
    It is clear that
    $$
        \|\I_KT_tu\|_{L^\infty(\mu_t)}\le\sup_{x\in K}T_tu(x).
    $$
    Suppose that there exists a Borel set $N$ with $\mu_t(N)=0$ such that
    $$
        \|\I_KT_tu\|_{L^\infty(\mu_t)}
        =\sup_{x\in K\setminus N}T_tu(x)
        <\sup_{x\in K}T_tu(x).
    $$
    Since $T_tu$ is a continuous function, there is some $x_0\in K$ such that $T_tu(x_0)=\sup_{x\in K}T_tu(x)$. We conclude that there exists some $\delta>0$ such that $B(x_0,\delta)\subset N$; otherwise, for every $\eta>0$, $B(x_0,\eta)\cap (K\setminus N)\neq \emptyset$, and we could choose a sequence $\{x_n\}_{n\in \mathds{N}}\subset K\setminus N$ with $x_n\to  x_0$. Then, however, $\sup_{x\in K\setminus N}T_tu(x)\ge \sup_{n}T_tu(x_n)=u(x_0)$. Therefore, $\|\I_KT_tu\|_{L^\infty(\mu_t)}=\sup_{x\in K}T_tu(x)$ which is a contradiction. Thus, $\mu_t(B(x_0,\delta))=0$. Thus,
    $$
        \int w(x)P_t(x,B(x_0,\delta))\,dx=0.
    $$
    Due to the weak irreducibility and the strong Feller property of $\{T_t\}_{t\ge0}$, we obtain $\mu_t(B(x_0,\delta))>0$, which is impossible. This proves our claim. Using \eqref{cou2} we easily deduce \eqref{cou1}.

    \medskip
    Assume now that $\{T_t\}_{t\geq 0}$ has the strong Feller property and that for every $u\in C_c^\infty(\R^d)$, $T_tu$ converges locally uniformly to $u$ as $t\to 0$. From the proof of \eqref{cou2} and the proof of Theorem \ref{thma}, $(a)\Rightarrow (d)$, we know that for every $t>0$, there is some Young function $\Phi_t:\R\mapsto \R^+$ with $\lim_{x\to \infty}\frac{\Phi_t(x)}{x}=\infty$ such that for all compact sets $K\subset\R^d$ and $u\in C_c^\infty(\R^d)$,
    \begin{equation*}
        \|\I_KT_tu\|_{\textrm{L}^\infty(\mu)}
        \le C(K,t)\,\|u\|_{\Phi_t},
    \end{equation*}
    where for some continuous $w>0$ satisfying $\int w(x)\,dx = 1$
    $$
        \mu(\cdot):=\frac{\int_0^\infty e^{-t}dt\int w(x)P_t(x,\cdot)dx}{\int_0^\infty e^{-t}dt\int w(x)P_t(x,\R^d)dx}.
    $$
    We claim that
    $$
        \|\I_KT_tu\|_{L^\infty(\mu)}
        =\|\I_KT_tu\|_{L^\infty(dx)}
        =\sup_{x\in K}T_tf(x).
    $$
    As before, the inequality `$\leq$' is trivial. On the other hand, assume that there exists a Borel set $N$ with $\mu(N)=0$ such that
    $$
        \|\I_KT_tf\|_{L^\infty(\mu)}
        =\sup_{x\in K\setminus N}T_tf(x)
        <\sup_{x\in K}T_tf(x).
    $$
    With a similar argument as above we get that there exist $x_0\in\R^d$ and $\delta>0$ such that $\mu(B(x_0,\delta))=0$. Choose $h\in C_c^\infty(\R^d)$ such that $\I_{B(x_0, \delta/2)}\le h\le \I_{B(x_0, \delta)}$. Obviously, $\int h\,d\mu=0$. Therefore,
    $$
        \int_0^\infty e^{-t}dt\int f(x)T_th(x)\,dx=0.
    $$
    Since $T_th$ converges locally uniformly to $h$ as $t\to 0$, there is some $t_0>0$ such that for all $s\le  t_0$,
    $$
        \sup_{x\in B(x_0,\delta/2)} |T_sh(x)-h(x)|\le 1/2.
    $$
    As $h(x)=1$ for all $x\in B(x_0,\delta/2)$, $T_sh(x)\ge 1/2$ for all $x\in B(x_0,\delta/2)$ and $s\le t_0$, which implies that $\int h\,d\mu > 0$. This is a contradiction and our claim is established. Now \eqref{cou1} follows easily from \eqref{cou2}.
\end{proof}

\section{Feller semigroups and processes}\label{1}
A \emph{Feller semigroup} is a sub-Markovian semigroup which has the Feller property and which is on $C_\infty(\R^d)$ strongly continuous: $\lim_{t\to 0}\|T_t f - f\|_\infty =0$ for all $f\in C_\infty(\R^d)$. A \emph{($d$-dimensional) Feller process} is a Markov process $\{X_t\}_{t\ge0}$ with state space $\R^d$ where the associated semigroup $\{T_t\}_{t\geq 0}$ is a Feller semigroup. Feller processes always have c\`adl\`ag, i.e.\ right-continuous with finite left-hand limits, versions and it is a routine argument that the Feller (or $C_b$-Feller) property of the semigroup together with the Markov property entail the strong Markov property of the process. As usual,
$$
    T_tu(x)=\Ee^x(u(X_t)),
    \quad u\in C_\infty(\R^d),\; t\ge 0,\; x\in\R^d,
$$
and the infinitesimal generator $(A,D(A))$ (of the semigroup or the process) is given by the strong limit
$$
    Au=\lim_{t\to 0}\frac{T_tu-u}{t}
$$
on the set $D(A)\subset C_\infty(\R^d)$ of those $u\in C_\infty(\R^d)$ where this limit exists in norm sense. We will call the generator of a Feller semigroup (or Feller process) a \emph{Feller generator}.

Under the assumption that the test functions $C_c^\infty(\R^d)$ are contained in $D(A)$ \cite{cou}, see also \cite{JS} and \cite{jac-book}, proved that the generator $A$ restricted to $C_c^\infty(\R^d)$ is a pseudo-differential operator
\begin{equation}\label{sf3}
    Au(x)
    =-p(x,D)u(x)
    :=-\int_{\R^d}e^{ix\cdot\xi}p(x,\xi)\hat{u}(\xi)\,d\xi,
\end{equation}
where $\hat{u}(\xi)=(2\pi)^{-d}\int e^{-ix\xi}u(x)\,dx$ denotes the Fourier transform. The \emph{symbol} of the operator, $p:\R^d\times \R^d\to  \mathds{C}$ appearing in \eqref{sf3}, is locally bounded and has the L\'{e}vy-Khinchine representation
\begin{equation}\label{sf2}
    p(x,\xi)
    =\frac 12\,\xi\cdot a(x)\xi -ib(x)\cdot\xi + \int_{z\neq 0} \big(1-e^{iz\cdot\xi}+iz\cdot\xi\, \I_{\{|z|\le1\}} \big)\,\nu(x,dz),
\end{equation}
where for each $x\in\R^d$ the triplet $(a(x),b(x),\nu(x,dz))$ is the L\'{e}vy characteristics, i.e.\ $a(x):=(a_{ij}(x))_{d\times d}$ is a nonnegative definite matrix-valued function, $b(x):=(b_i(x))$ is a measurable function, and $\nu(x,dz)$ is a nonnegative, $\sigma$-finite kernel on $\R^d\setminus\{0\}$ such that $\int_{z\neq 0} (1\wedge |z|^2)\nu(x,dz)<+\infty$ for every $x\in\R^d$.

As in Sections \ref{intro} and \ref{submark} we denote by $P_t(x,dy) = \Pp^x(X_t\in dy)$ the transition function of the process $X_t$, so that
\begin{equation*}
    T_tu(x)=\int_{\R^d}u(y)P_t(x,dy)
\end{equation*}
for all bounded and measurable functions $u:\R^d\to\R$.


\subsection{Feller generators  with bounded coefficients}

Since $\xi\mapsto p(x,\xi)$ has a L\'evy-Khinchine representation, it is not hard to see that
$$
    |p(x,\xi)| \leq h(x)\cdot (1+|\xi|^2)
    \quad\text{for all\ \ } x,\xi\in\R^d.
$$
In fact, $h(x)$ can be chosen to be $2\,\sup_{|\xi|\leq 1}|p(x,\xi)|$, cf.\ \cite{S3} or \cite{BS}. If $h$ is bounded, we say that the operator $p(x,D)$ has \emph{bounded coefficients} in the sense that $h$ is bounded if and only if
\begin{equation*}\label{bounded-coeff}
    \sup_{x\in\R^d}\max_{j,k}|a_{jk}(x)|
    + \sup_{x\in\R^d}\max_{j}|b_{j}(x)|
    + \sup_{x\in\R^d}\int_{z\neq 0} (1\wedge |z|^2)\,\nu(x,dz)
    < \infty,
\end{equation*}
see \cite{S3}.

If $C_c^\infty(\R^d)\subset D(A)$ and if $h$ is bounded, it is known that the continuity of $x\mapsto p(x,0)$ entails that $(T_t)_{t\ge0}$ is a $C_b$-Feller semigroup, cf.\ \cite[Theorem 4.3]{S1}. Thus, as a direct consequence of Theorems \ref{thma} and \ref{ou1}, we have

\begin{theorem}\label{thm2}
    Let $\{T_t\}_{t\ge0}$ be a Feller semigroup with generator $(A,D(A))$ such that $C_c^\infty(\R^d)\subset D(A)$. Suppose that $\sup_{x\in \R^d}|p(x,\xi)|\le c(1+|\xi|^2)$ for every $\xi\in \R^d$ and that the function $x\mapsto p(x,0)$ is continuous. The semigroup $(T_t)_{t\ge0}$ has the strong Feller property if one of the following conditions is satisfied:
    \begin{enumerate}
    \item
        For every $t>0$, the family of measures $(P_t(x,dy))_{x\in\R^d}$ is locally uniformly absolutely continuous with respect to Lebesgue measure, i.e.\ for any compact set $K\subset\R^d$
        $$
            \lim_{\varepsilon\to 0}\,\sup_{A\in {\Bb(\R^d)},\, \mathrm{Leb}(A)\le \varepsilon}\,\sup_{z\in K}P_t(z,A)=0.
        $$

    \item
        For every $t>0$, there exists some Young function $\Phi_t:\R\to  \R^+$ such that $\Phi_t(x)=0$ if and only if $x=0$ and such that for all compact sets $K\subset\R^d$ and all $u\in C_c^\infty(\R^d)$,
       \begin{equation*}
            \|\I_KT_tu\|_\infty\le C(K,t)\|u\|_{\Phi_t},
       \end{equation*}
       where $\|\cdot\|_{\Phi_t}$ denotes the norm of Orlicz space $\mathds{L}^{\Phi_t}(\mathrm{Leb})$;
    \end{enumerate}
\end{theorem}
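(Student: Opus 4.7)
The plan is to reduce Theorem \ref{thm2} directly to the two criteria already proved, namely Theorem \ref{thma} and Theorem \ref{ou1}. The first and essentially only non-trivial step is to verify that, under the standing assumptions, the semigroup $\{T_t\}_{t\geq 0}$ has the $C_b$-Feller property. This is handled by the result from \cite[Theorem 4.3]{S1} which is quoted in the paragraph preceding the theorem: the bound $\sup_{x\in\R^d}|p(x,\xi)|\le c(1+|\xi|^2)$ is equivalent to the pseudo-differential operator $p(x,D)$ having bounded coefficients in the sense made precise just above, and together with the continuity of $x\mapsto p(x,0)$ this is exactly the hypothesis of that cited result. Hence $\{T_t\}_{t\ge 0}$ is $C_b$-Feller, and the road to applying the abstract criteria is open.

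For condition (1), I would simply specialise Theorem \ref{thma} by taking $\mu_t$ to be Lebesgue measure on $\R^d$, which is a positive Radon measure and therefore an admissible choice in part (b) of that theorem. The locally uniform absolute continuity of $(P_t(x,dy))_{x\in\R^d}$ with respect to Lebesgue measure is precisely what has been assumed, so Theorem \ref{thma}, $(b)\Rightarrow(a)$, yields the strong Feller property. For condition (2), the same strategy applies to Theorem \ref{ou1}: one takes $\mu_t$ to be Lebesgue measure, which is admissible since the first half of Theorem \ref{ou1} only requires $\mu_t$ to be a Radon measure, and then the assumed estimate $\|\I_K T_t u\|_\infty\le C(K,t)\|u\|_{\Phi_t}$ is literally \eqref{cou1}. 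The conclusion of Theorem \ref{ou1} then gives the strong Feller property.

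Since both cases are direct specialisations of previously established results, I do not expect any serious obstacle. The only minor point worth flagging is that Lebesgue measure is not a probability measure, so one must use part (b) of Theorem \ref{thma} (which is stated for an arbitrary positive Radon measure $\mu_t$) rather than parts (c) or (d); the same remark applies to Theorem \ref{ou1}, whose first implication is formulated precisely at the level of generality of Radon measures. Beyond this accounting, the proof is essentially a one-line invocation of each criterion.
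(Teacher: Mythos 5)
Your proposal is correct and coincides with the paper's own argument: the paper likewise invokes \cite[Theorem 4.3]{S1} to obtain the $C_b$-Feller property from the boundedness of the symbol and the continuity of $x\mapsto p(x,0)$, and then treats the theorem as an immediate specialisation of Theorem \ref{thma} (part (b), with $\mu_t=\mathrm{Leb}$) and of the first half of Theorem \ref{ou1}. Your remark that Lebesgue measure is only a Radon measure, so one must use the Radon-measure versions of the criteria, is exactly the right bookkeeping.
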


If the Feller generator $-p(x,D)$ does not depend on $x$, i.e.\ if we have constant coefficients, the corresponding Feller processes are exactly the L\'evy processes---that is stochastic processes with independent and stationary increments---and the associated semigroups $\{T_t\}_{t\geq 0}$ are semigroups of convolution operators given by
$$
    T_tu(x)=\int_{\R^d}u(x+y)\mu_t(dy).
$$
Note that $\mu_t(\cdot-x) = P_t(x,\cdot) = \Pp^x(X_t\in\cdot) = \Pp^0(X_t+x\in\cdot)$ and that $\{\mu_t\}_{t\ge0}$ is a convolution semigroup of sub-probability measures on $\R^d$.

Hawkes, \cite[Theorem 2.2]{HAW}, proved that the semigroup corresponding to a L\'{e}vy process has the strong Feller property if and only if for each $t>0$ and $x\in\R^d$ the transition function $P_t(x,\cdot)=\mu_t(\cdot - x)$ is absolutely continuous with respect to Lebesgue measure. Therefore, for every $t>0$ and $x\in\R^d$, see e.g.\ \cite[Lemma 2.1]{C},
$$
    \lim_{\varepsilon\to 0}\sup\limits_{A\in {\Bb(\R^d)},\, \mathrm{Leb}(A)\le \varepsilon}P_t(x,A)
    = \lim_{\varepsilon\to 0}\sup\limits_{A\in {\Bb(\R^d)},\, \mathrm{Leb}(A)\le \varepsilon}\mu_t(A-x)
    =0.
$$
Since Lebesgue measure is invariant under translations, we conclude that L\'{e}vy processes have the strong Feller property if and only if for each $t>0$,
$$
    \lim_{\varepsilon\to 0}\sup\limits_{A\in {\Bb(\R^d)},\, \mathrm{Leb}(A)\le \varepsilon}\,\sup_{x\in\R^d} P_t(x,A)
    = \lim_{\varepsilon\to 0}\sup\limits_{A\in {\Bb(\R^d)},\, \mathrm{Leb}(A)\le \varepsilon}\mu_t(A)=0,
$$
which indicates that Theorem \ref{thm2} is sharp for L\'{e}vy processes. Based on this observation and the proof of Theorem \ref{ou1}, we obtain a new characterization of the strong Feller property of a L\'{e}vy process.

\begin{corollary}\label{newfeller}
    Let $\{X_t\}_{t\ge0}$ be a L\'{e}vy process on $\R^d$. Then the following two statements are equivalent:
    \begin{enumerate}
    \item
        The process $\{X_t\}_{t\ge0}$ has the strong Feller property;

    \item
        For every $t>0$, there exists some Young function $\Phi_t:\R\to  \R^+$ such that $\Phi_t(x)=0$ if and only if $x=0$ and such that for all $u\in C_c^\infty(\R^d)$,
        \begin{equation*}
            \|T_tu\|_\infty\le C(t)\|u\|_{\Phi_t},
        \end{equation*}
        where $\|\cdot\|_{\Phi_t}$ denotes the norm of Orlicz space $\mathds{L}^{\Phi_t}(\mathrm{Leb})$.
    \end{enumerate}
\end{corollary}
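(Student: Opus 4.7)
The plan is to treat the two implications separately, using Theorem \ref{ou1} in one direction and Hawkes's theorem together with translation invariance of the Lévy semigroup in the other.

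For (2)~$\Rightarrow$~(1), note that any Lévy semigroup is automatically $C_b$-Feller: the representation $T_tu(x)=\int u(x+y)\,\mu_t(dy)$ together with dominated convergence shows that $T_tu$ is bounded and continuous for every $u\in C_b(\R^d)$. Since condition (2) is a uniform-in-$x$ strengthening of the local estimate \eqref{cou1}, the strong Feller property then follows at once from the first part of Theorem \ref{ou1}.

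For the converse (1)~$\Rightarrow$~(2), Hawkes's theorem (recalled just before the corollary) ensures that $\mu_t$ has a Lebesgue density $p_t\in L^1(\mathrm{Leb})$ with $p_t\ge 0$, so that
$$
    |T_tu(x)|\le\int|u(y)|\,p_t(y-x)\,dy.
$$
The core of the argument is to produce a Young function $\Phi_t$ with $\Phi_t(x)=0\Leftrightarrow x=0$ such that $p_t\in\mathds{L}^{\Phi_{t,c}}(\mathrm{Leb})$. Once this is arranged, the extension of Hölder's inequality recalled in Section \ref{submark}, together with the translation invariance of the Luxemburg norm on $\mathds{L}^{\Phi_{t,c}}(\mathrm{Leb})$, gives
$$
    \int |u(y)|\,p_t(y-x)\,dy\le 2\,\|u\|_{(\Phi_t)}\,\|p_t(\cdot-x)\|_{(\Phi_{t,c})}=2\,\|u\|_{(\Phi_t)}\,\|p_t\|_{(\Phi_{t,c})}.
$$
Taking the supremum in $x$ and using $\|u\|_{(\Phi_t)}\le\|u\|_{\Phi_t}$ yields (2) with $C(t)=2\,\|p_t\|_{(\Phi_{t,c})}$.

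To construct $\Phi_t$, I would apply the de la Vallée-Poussin criterion (already cited in the proof of Theorem \ref{ou1}) to the single $L^1$ function $p_t$, obtaining a Young function $\Psi_t$ with $\lim_{y\to\infty}\Psi_t(y)/y=\infty$ and $\int\Psi_t(p_t)\,dy<\infty$; its standard realization as $\int_0^{|y|}\varphi(s)\,ds$ with $\varphi(0)=0$ gives $\Psi_t'(0+)=0$, or else one replaces $\Psi_t$ by $(\Psi_t(y)-\alpha y)_+$ for sufficiently small $\alpha>0$, which remains a Young function dominated by $\Psi_t$ and of the same superlinear growth. Setting $\Phi_t:=\Psi_{t,c}$, an inspection of $\Phi_t(x)=\sup_{y\ge 0}(y|x|-\Psi_t(y))$ shows that $\Phi_t(x)>0$ iff $|x|>\Psi_t'(0+)$, so the normalization $\Psi_t'(0+)=0$ is exactly what guarantees $\Phi_t(x)=0\Leftrightarrow x=0$. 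The delicate point of the argument is precisely this fine-tuning of $\Psi_t$ near the origin; once it is in place, the remainder is standard bookkeeping with Orlicz-space inequalities and the translation invariance of Lebesgue measure.
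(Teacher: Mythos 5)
Your proof is correct and follows, in essence, the route the paper intends (the paper only sketches this corollary as a consequence of Hawkes's theorem and the proof of Theorem \ref{ou1}). The implication (2)\,$\Rightarrow$\,(1) is handled exactly as in the paper. For (1)\,$\Rightarrow$\,(2) the paper would rerun the duality argument of part (2) of the proof of Theorem \ref{ou1} with $K$ replaced by $\R^d$, using that translation invariance makes the family $\{p_t(\cdot-x)\}_{x\in\R^d}$ uniformly integrable; you shortcut this by applying the Orlicz--H\"older inequality directly to the convolution $\int|u(y)|\,p_t(y-x)\,dy$ together with $\|p_t(\cdot-x)\|_{(\Psi_t)}=\|p_t\|_{(\Psi_t)}$, which is cleaner and avoids the generalized Minkowski inequality \eqref{cou22} (whose statement in the paper assumes a probability measure anyway). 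More importantly, you correctly isolate a point the paper glosses over: the converse part of Theorem \ref{ou1} only delivers a Young function with superlinear growth, whereas statement (2) of the corollary demands $\Phi_t(x)=0\Leftrightarrow x=0$, and your reduction of this to the normalization $\Psi_t'(0+)=0$ of the de la Vall\'ee-Poussin function is exactly the missing step. Two small caveats: what you need is $\varphi(0+)=0$, i.e.\ $\varphi$ vanishing on a right neighbourhood of the origin (which the standard step-function construction does provide), not merely $\varphi(0)=0$; and in your alternative fix $(\Psi_t(y)-\alpha y)_+$ one must take $\alpha\ge\Psi_t'(0+)$ rather than ``$\alpha$ sufficiently small''---for $\alpha<\Psi_t'(0+)$ the positive part never activates near $0$ and the slope at the origin remains positive. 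Neither caveat affects the validity of the argument, since your primary construction works.
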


Let us finally apply Corollary
\ref{thma1} to stable-like processes. Loosely speaking, a
stable-like process on $\R^d$ is a Feller process, whose generator
has the same form as that of a rotationally symmetric stable L\'{e}vy
motion, but the index of stability depends on the position, e.g.\ see
\cite{BAss}. The associated generator is given by
$$L^{(\alpha)}u(x)=\int_{\R^d}\bigg(u(x+\xi)-u(x)-\langle\nabla u(x),z\rangle\I_{\{|z|\le1\}}\bigg)\,\frac{C_{\alpha(x)}}{|\xi|^{d+\alpha(x)}}\,d\xi,\qquad u\in C_b^2(\R^d),$$ where $0<\alpha(x)<2$, and $C_{\alpha(x)}$ is a constant defined through the L\'{e}vy-Khintchine formula
$$|\xi|^{\alpha(x)}=C_{\alpha(x)}\int_{\R^d\backslash\{0\}}\Big(1-\cos \langle \xi, z\rangle\Big)\,\frac{dz}{|z|^{d+\alpha(x)}},$$ i.e.\
 $$C_{\alpha(x)}=\alpha(x)2^{\alpha(x)-1}\Gamma\big((\alpha(x)+d)/2\big)\Big/\Big(\pi^{d/2}\Gamma\big(1-\alpha(x)/2\big)\Big),$$ see \cite[Exercise 18.23, Page 184]{BF}. In other words, the operator $L^{(\alpha)}$ can be regarded as a pseudo-differential operator of variable order with symbol $|\xi|^{\alpha(x)}$, i.e.\ $L^{(\alpha)}=-(-\triangle)^{\alpha(x)/2}$.

\begin{theorem}\label{strong} Assume that $\alpha\in C_b^\infty(\R^d)$ such that $0< \underline\alpha = \inf\alpha \leq\sup\alpha = \overline\alpha < 2$.
Then, there exists a unique Markov semigroup $\{P_t\}_{t\ge 0}$
associated with the symbol $|\xi|^{\alpha(x)}$, such that $\{P_t\}_{t>
0}$ is strong Feller. \end{theorem}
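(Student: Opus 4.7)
The plan is to reduce the theorem to Corollary \ref{thma1}: it suffices to construct a Feller semigroup $\{P_t\}_{t \geq 0}$ with the prescribed symbol $|\xi|^{\alpha(x)}$ and then to verify that for each $t > 0$ the kernel $P_t(x,\cdot)$ has a locally bounded density $p_t(x,y)$ with respect to Lebesgue measure. Under the bounds $0 < \underline\alpha \leq \overline\alpha < 2$ the symbol has uniformly bounded order, so it falls into the standard framework for variable-order stable-like operators.

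For existence and uniqueness of the semigroup I would invoke the well-posedness of the $C_c^\infty(\R^d)$-martingale problem for $L^{(\alpha)}$. Since $\alpha \in C_b^\infty(\R^d)$, the symbol $p(x,\xi) = |\xi|^{\alpha(x)}$ is smooth in $x$, negative-definite in $\xi$, and of uniformly bounded order between $\underline\alpha$ and $\overline\alpha$; under these hypotheses the symbolic-calculus/parametrix construction (in the spirit of the Bass-type martingale problem for stable-like generators) produces a unique Feller process whose associated semigroup $\{P_t\}_{t \geq 0}$ has generator extending $L^{(\alpha)}$ on $C_c^\infty(\R^d)$. In particular, $\{P_t\}$ has the Feller, and hence the $C_b$-Feller, property.

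For the density I would build $p_t(x,y)$ by a freezing-of-coefficients parametrix. Freezing $\alpha$ at a base point $x_0$ yields the explicit rotationally symmetric $\alpha(x_0)$-stable kernel $p^{(x_0)}_t(x,y)$ as the zeroth-order approximation, and $p_t(x,y)$ is obtained as a convergent perturbation series whose error terms are controlled by the derivatives of $\alpha$ and by classical estimates on the frozen stable densities. The combination of $\alpha \in C_b^\infty(\R^d)$ and the uniform strict bounds $0 < \underline\alpha \leq \overline\alpha < 2$ yields joint continuity of $(t,x,y) \mapsto p_t(x,y)$ on $(0,\infty)\times \R^d \times \R^d$, and in particular local boundedness on $\R^d \times \R^d$ for each fixed $t > 0$. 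With the $C_b$-Feller property already at hand, Corollary \ref{thma1}, applied with $\mu$ equal to Lebesgue measure, then immediately yields the strong Feller property of $\{P_t\}_{t > 0}$.

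The main obstacle is the density step. The variable exponent $\alpha(x)$ rules out any convolution-type formula, so the parametrix argument is genuinely pseudo-differential: one must absorb the derivatives falling on $\alpha$ by the smoothing gained from the symbol of order at least $\underline\alpha > 0$, and control the remainder uniformly enough to obtain joint continuity rather than merely an $L^p$-bound. Once that is done, the construction in the first step is standard for smooth variable-order symbols, and the strong Feller conclusion is a one-line application of Corollary \ref{thma1}.
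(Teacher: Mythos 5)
Your proposal follows essentially the same route as the paper: well-posedness of the martingale problem for $L^{(\alpha)}$ in the sense of Bass gives existence, uniqueness and the $C_b$-Feller property, and the strong Feller property then follows from Corollary \ref{thma1} once one knows that $P_t(x,dy)$ has a locally bounded Lebesgue density. The parametrix (freezing-of-coefficients) construction you sketch for the density is exactly what the paper outsources to the literature --- Negoro for existence of $p_t(x,y)$ and Kolokoltsov for its local boundedness --- so the argument is correct, with that analytic step supplied by citation rather than carried out from scratch.
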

\begin{proof} According to \cite[Corollary 2.3]{BAss}, there exists a unique strong Markov process $(X_t,\Pp^x)$ for which $\Pp^x$ solves the martingale problem for $L^{(\alpha)}$ at each point $x\in\R^d$. For any $t\ge0$, $x\in\R^d$ and $f\in B_b(\R^d)$, we define
 $$P_tf(x)=\Ee^x(f(X_t)).$$ By \cite[Propositions 6.1 and 6.2]{BAss}, we know that $\{P_t\}_{t\ge0}$ is a Markov semigroup and has the $C_b(\R^d)$-Feller property.

Let $P_t(x,dy)$ be the kernel representing $P_t$, i.e.\ $P_t(x,A)=P_t\I_A(x)$ for all $t>0$, $x\in\R^d$ and $A\in \mathscr{B}(\R^d)$. \cite[Theorem 1.7]{Ne} furthermore shows that for any $t>0$ and $x\in\R^d$, $P_t(x,dy)$ has a density with respect to Lebesgue measure. Denote this density by $p_t(x,y)$. Note that $x\mapsto C_{\alpha(x)}$ is a positive function of $C_b^\infty(\R^d)$. Then, as a consequence of \cite[Theorem 5.1 and its Corollary, Pages 759--760]{Kolo}, we get that for every $t>0$, $p_t(x,y)$ is locally bounded on $\R^d\times \R^d$.

With all the conclusions above, the required assertion immediately follows from Corollary \ref{thma1}. \end{proof}

\subsection{Regularity of Feller processes with the strong Feller property}
Unless otherwise indicated, $\{X_t\}_{t\ge0}$ is a $d$-dimensional Feller process with generator $(A,D(A))$ such that $C_c^\infty(\R^d)\subset D(A)$. Then  $A|_{C_c^\infty(\R^d)}=-p(x,D)$ is a pseudo-differential operator with locally bounded symbol $p(x,\xi)$ given by \eqref{sf2}.

We want to show that the strong Feller property is closely connected with the fact that the function $x\mapsto \Ee^xu(X_{\tau_U})$ is continuous on $U$, where $u\in B_b(\R^d)$, $U$ is an open set and $\tau_U$ is the first exit time from the set $U$, i.e.
$$
    \tau_U=\inf\{t\ge0\::\: X(t)\notin U\}.
$$
\begin{theorem}\label{popop}
    Suppose that $\{X_t\}_{t\ge0}$ is a Feller process which has also the strong Feller property. Then, for any open set $U$ and all bounded measurable function $u$, the function $x\mapsto \Ee^xu(X_{\tau_U})$ is continuous on $U$.
 \end{theorem}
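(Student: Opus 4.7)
The plan is to write $h(x) := \Ee^x u(X_{\tau_U})$ as a uniform limit on a neighborhood of each point of $U$ of functions of the form $T_t(h\I_U)$, and then to invoke the strong Feller property to conclude that these approximants (and hence $h$) are continuous.

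Fix $x_0\in U$ and pick $r>0$ small enough that $\overline{B(x_0,2r)}\subset U$; set $V=B(x_0,r)$ and $W=B(x_0,2r)$. For every starting point $x\in V$ one has $\tau_U\ge\tau_W$ $\Pp^x$-a.s., since any path leaving $U$ must first leave $W\subset U$. The key algebraic identity is obtained by splitting according to $\{\tau_U\le t\}$ and using the Markov property at time $t$: on $\{\tau_U>t\}$ the c\`adl\`ag property forces $X_t\in U$ and $\tau_U=t+\tau_U\circ\theta_t$, whence
$$
    \Ee^x\bigl[u(X_{\tau_U})\I_{\{\tau_U>t\}}\bigr]
    =\Ee^x\bigl[\I_{\{\tau_U>t\}}h(X_t)\bigr].
$$
Comparing this with $T_t(h\I_U)(x)=\Ee^x[\I_{\{\tau_U>t\}}h(X_t)]+\Ee^x[\I_{\{\tau_U\le t,\,X_t\in U\}}h(X_t)]$ and using $\|h\|_\infty\le\|u\|_\infty$, one obtains the crucial bound
$$
    \bigl|h(x)-T_t(h\I_U)(x)\bigr|
    \le 2\|u\|_\infty\,\Pp^x(\tau_U\le t)
    \le 2\|u\|_\infty\,\Pp^x(\tau_W\le t),\qquad x\in V.
$$
Because $h\I_U\in B_b(\Rd)$, the strong Feller property guarantees that the function $T_t(h\I_U)$ on the left is continuous for every fixed $t>0$.

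The remaining ingredient is a uniform exit-time estimate: $\lim_{t\to 0}\sup_{x\in \overline V}\Pp^x(\tau_W\le t)=0$. This is a standard fact for Feller processes and I expect it to be contained in the preceding exit-time lemma promised in the abstract; absent that, it can be derived by applying Dynkin's formula to a cut-off $\phi\in C_c^\infty(\Rd)$ with $\phi\equiv 0$ on $\overline W$ and $\phi\equiv 1$ on a compact shell outside $W$, together with the Feller continuity $T_t\phi\to\phi$ locally uniformly at $t=0$ (which reduces to supremum-over-$x\in\overline V$ via continuity of $x\mapsto p(x,\xi)$ or via the elementary symbol estimate $\Pp^x(\tau_W\le t)\le ct\sup_{y,|\xi|\le1/r}|p(y,\xi)|$). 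Granted this, $T_t(h\I_U)\to h$ uniformly on $V$ as $t\to 0$, so $h$ is continuous on $V$, in particular at $x_0$; since $x_0\in U$ was arbitrary, $h\in C(U)$.

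The main obstacle is the uniform exit-time bound, which is where the Feller (not merely strong Feller) structure enters; every other step is an application of the Markov property and of the strong Feller property to the bounded Borel function $h\I_U$.
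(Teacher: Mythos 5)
Your argument is correct, and it takes a genuinely more direct route than the paper's. The paper proceeds in three steps: it first shows that $f_t(x)=\Ee^x\big(\I_{\{\tau_U>t\}}u(X_t)\big)$ is continuous on $U$ (Markov property at a small time $s$ plus a uniform exit-time estimate), then that $h_t(x)=\Ee^x\big(\I_{\{\tau_U\le t\}}u(X_{\tau_U})\big)$ is continuous --- this step uses the \emph{strong} Markov property --- and finally lets $t\uparrow\infty$ after replacing $u$ by $\tilde u=c\,\I_U+u\,\I_{U^c}$ with $c\le\inf_{U^c}u$, so that $\Ee^x\tilde u(X_{\tau_U\wedge t})$ increases to $\Ee^x u(X_{\tau_U})$; this gives lower semicontinuity, and the same applied to $-u$ gives continuity. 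You instead exhibit $h=\Ee^{\cdot}u(X_{\tau_U})$ directly as a locally uniform limit of $T_t(h\I_U)$ as $t\to0$: your identity $\Ee^x\big[u(X_{\tau_U})\I_{\{\tau_U>t\}}\big]=\Ee^x\big[\I_{\{\tau_U>t\}}h(X_t)\big]$ needs only the simple Markov property at the deterministic time $t$, so you bypass both the strong Markov property and the semicontinuity/truncation step. The uniform bound $\sup_{x\in D}\Pp^x(\tau_U\le t)\to0$ ($D\subset U$ compact) that you flag as the remaining obstacle is exactly the paper's Proposition \ref{maxx}, proved precisely as you suggest from $\Pp^x(\tau_{B(x,r)}\le t)\le c\,t\sup_{|y-x|\le r}\sup_{|\xi|\le1/r}|p(y,\xi)|$ and local boundedness of the symbol, so nothing essential is missing. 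Two minor points you should make explicit: (i) on $\{\tau_U=\infty\}$ one uses the convention $u(X_\infty):=0$, i.e.\ $h(x)=\Ee^x[u(X_{\tau_U});\tau_U<\infty]$, under which your identity and the bound $|h-T_t(h\I_U)|\le2\|u\|_\infty\Pp^{\cdot}(\tau_U\le t)$ survive unchanged (the paper's truncation handles this automatically); (ii) $h$ is a priori only universally measurable, so a word is needed on why $T_t$ and the strong Feller property apply to $h\I_U$ --- but the paper's own proof faces the identical issue with $f_{t-s}$ and $h_{t-s}$, and it is resolved in the standard way (by Theorem \ref{thma}, $P_t(x,\cdot)\ll\mu_t$, so one may replace $h\I_U$ by a Borel version). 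What the paper's longer route buys is the continuity, for each fixed $t$, of the auxiliary functions $\Ee^x\big(\I_{\{\tau_U>t\}}u(X_t)\big)$ and $\Ee^x u(X_{\tau_U\wedge t})$, which are of independent interest; what yours buys is brevity and a weaker input from the Markov structure.
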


\begin{proof}
    Our approach uses ideas of \cite[Volume II; Section 13.3, Theorem 13.1, page 30--31]{Dy} and \cite[Lemma 2.2]{BH}. We split the proof into three steps.

    \medskip
    \emph{Claim 1: for any $t>0$ and all bounded measurable functions $u$, the function $x\mapsto f_t(x):=\Ee^x(\I_{\{\tau_U>t\}}u(X_t))$ is continuous on $U$.}

    Denote by $(\theta_t)_{t\ge0}$ the family of canonical shifts on the path space. Then we have for all $s\in(0,t)$ that
    $$
        \I_{\{\tau_U>s\}}\, \theta_s\big(\I_{\{\tau_U>t-s\}}u(X_{t-s})\big)
        =\I_{\{\tau_U>t\}}u(X_t).
    $$
    By the Markov property we get
    \begin{align*}
        f_t(x)
        &=\Ee^x\Big(\I_{\{\tau_U>s\}}\theta_s\big(\I_{\{\tau_U>t-s\}}u(X_{t-s})\big)\Big)\\
        &=\Ee^xf_{t-s}(X_s)-\Ee^x\Big(\I_{\{\tau_U\le s\}}\theta_s \big(\I_{\{\tau_U>t-s\}}u(X_{t-s})\big)\Big).
    \end{align*}
    Because of the strong Feller continuity, $x\mapsto \Ee^xf_{t-s}(X_s)$ is continuous. The second term is bounded by $\|u\|_\infty \Pp^x(\tau_U\le s)$. As $s\downarrow0$, it converges to zero uniformly on every compact subset of $U$, see Proposition \ref{maxx} below. This proves that the function $f_t$ is continuous on $U$.

    \medskip
    \emph{Claim 2: for any $t>0$ and all bounded measurable functions $u$, the function $g_t(x):=\Ee^x(u_{\tau_U\wedge t})$ is continuous on $U$.}

    It is enough to show that the function $h_t(x):=g_t(x)-f_t(x)=\Ee^x(\I_{\{\tau_U\le t\}}u(X_{\tau_U}))$ is continuous. As in the proof of Claim 1, we get for any $s\in(0,t)$,
    $$
        \I_{\{\tau_U>s\}}\, \theta_s(\I_{\{\tau_U\le t-s\}}u(X_{\tau_U}))
        = \I_{\{s<\tau_U\le t\}}u(X_{\tau_U}).
    $$
    Using the strong Markov property we find
    \begin{align*}
        h_t(x)
        &=\Ee^x\Big(\I_{\{\tau_U>s\}}\theta_s\big(\I_{\{\tau_U\le t-s\}} u(X_{\tau_U})\big)\Big)
          +\Ee^x\big(\I_{\{\tau_U\le s\}}u(X_{\tau_U})\big)\\
        &=\Ee^xh_{t-s}(X_s)
          + \Ee^x\bigg(\I_{\{\tau_U\le s\}}\Big[u(X_{\tau_U})-\theta_s\big(\I_{\{\tau_U\le t-s\}}u(X_{\tau_U})\big)\Big]\bigg).
    \end{align*}
    In view of the strong Feller property of the process $X_t$, the first term on the right, $x\mapsto \Ee^xh_{t-s}(X_s)$, is continuous. The second term is bounded by $2\,\|u\|_\infty \Pp^x(\tau_U\le s)$; letting $s\to 0$, it tends to zero uniformly on each compact subset of $U$, cf.\ Proposition \ref{maxx} below.

    \medskip
    Finally, if $u\in B_b(\R^d)$, we define $\tilde{u}$ by
    $$
        \tilde u(x) = c\,\I_{U}(x) + u(x)\I_{U^c}(x)
    $$
    where $c\le\inf_{x\in U^c}u(x)$. By Claim 2, the function $\Ee^x\tilde{u}(X_{\tau_U\wedge t})$ is continuous on $U$ for every $t>0$. Letting $t\uparrow\infty$, one has $\Ee^x\tilde{u}(X_{\tau_U\wedge t})\uparrow \Ee^xu(X_{\tau_U})$. Indeed, if $s,t>0$ we see by the very definition of the constant $c$ that
    \begin{align*}
        \Ee^x \tilde u(X_{\tau_U \wedge (s+t)})
        &= c\,\Pp^x(\tau_U > s+t) + \Ee^x\big(u(X_{\tau_U})\I_{\{\tau_U \leq s+t\}}\big)\\
        &= c\,\Pp^x(\tau_U > s+t)
            + \Ee^x\big(u(X_{\tau_U})\I_{\{t < \tau_U \leq s+t\}}\big)
            + \Ee^x\big(u(X_{\tau_U})\I_{\{\tau_u \leq t\}}\big)\\
        &\geq c\,\Pp^x(\tau_u > s+t) + c\,\Pp(t < \tau_U \leq s+t)
            + \Ee^x\big(u(X_{\tau_U})\I_{\{\tau_u \leq t\}}\big)\\
        &= \Ee^x \tilde u(X_{\tau_U \wedge t}).
    \end{align*}
    Hence, $\Ee^xu(X_{\tau_U})$ is lower semicontinuous. The same argument applied to $-u$ shows that $-\Ee^xu(X_{\tau_U})$ is lower semicontinuous. Therefore, the function $\Ee^xu(X_{\tau_U})$ is continuous. This finishes the proof.
\end{proof}

The following proposition has been used in the proof of Theorem \ref{popop} above.
\begin{proposition}\label{maxx}
    For any open set $U$, as $s\to 0$, the function $\Pp^x(\tau_U\le s)$ converges to zero uniformly on every compact set $D\subset U$.
\end{proposition}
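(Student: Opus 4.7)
The plan is to reduce the exit time from $U$ to an exit time from a small ball centred at the starting point and then apply the standard short-time maximal inequality for Feller processes satisfying $C_c^\infty(\R^d)\subset D(A)$.

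Since $D$ is compact and $U$ is open with $D\subset U$, fix $r>0$ such that the closed $r$-neighbourhood $\bar D_r:=\{y\in\R^d : \mathrm{dist}(y,D)\le r\}$ is still contained in $U$. For every $x\in D$ one has $B(x,r)\subset U$, so $\{X_t\notin U\}\subset\{X_t\notin B(x,r)\}$ forces $\tau_{B(x,r)}\le \tau_U$, and therefore
\[
\Pp^x(\tau_U\le s)\le \Pp^x(\tau_{B(x,r)}\le s)\le \Pp^x\Big(\sup_{u\le s}|X_u-x|\ge r\Big).
\]
Next I would invoke the maximal inequality (see \cite{S3} or \cite{BS}) in the form
\[
\Pp^x\Big(\sup_{u\le s}|X_u-x|\ge r\Big)\le c\, s \sup_{|y-x|\le r}\,\sup_{|\xi|\le 1/r}|p(y,\xi)|,
\]
whose standard proof applies Dynkin's formula at the bounded stopping time $\tau_{B(x,r)}\wedge s$ to a scaled cut-off $\chi((\cdot-x)/r)\in C_c^\infty(\R^d)\subset D(A)$, combined with the elementary bound $|p(y,\xi)|\le 2(1+|\xi|^2)\sup_{|\eta|\le 1}|p(y,\eta)|$ coming from the L\'evy--Khinchine representation \eqref{sf2}.

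Finally, since the symbol $p$ is locally bounded on $\R^d\times\R^d$ and $\bar D_r$ is compact, $M:=\sup_{y\in \bar D_r}\,\sup_{|\xi|\le 1/r}|p(y,\xi)|<\infty$. Chaining the two displays yields
\[
\sup_{x\in D}\Pp^x(\tau_U\le s)\le cM\,s\longrightarrow 0\quad\text{as } s\to 0,
\]
which is the uniform convergence on $D$. The only non-routine ingredient is the short-time maximal inequality in the second step; I would cite it from \cite{S3,BS} rather than reprove it, the remaining steps being only a geometric reduction plus a locally uniform boundedness remark for $p(y,\xi)$.
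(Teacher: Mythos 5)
Your proof is correct and follows essentially the same route as the paper: reduce $\tau_U$ to the exit time from a ball $B(x,r)$ contained in $U$, apply the short-time maximal inequality $\Pp^x(\tau_{B(x,r)}\le s)\le c\,s\sup_{|y-x|\le r}\sup_{|\xi|\le 1/r}|p(y,\xi)|$ (the paper cites \cite{WJ} and proves it as \eqref{1u1} in Proposition \ref{1u} via exactly the Dynkin-formula argument you sketch), and conclude by local boundedness of the symbol on a compact neighbourhood of $D$. Your statement of the uniform bound over the compact set $\bar D_r$ is in fact slightly cleaner than the paper's $\sup_{x\in U}$ in the final display.
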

\begin{proof}
    According to \cite[Theorem 1.1]{WJ}--- see also \eqref{1u1} below---for any $x\in \R^d$, $r>0$ and $t>0$,
    \begin{equation*}
        \Pp^x(\tau_{B(x,r)}\le t)\le c\,t \sup_{y \,:\, |y-x|\le r}\sup_{|\xi|\le 1/r}|p(y,\xi)|
    \end{equation*}
    with an absolute constant $c>0$. Set $r=\text{dist}(D, \partial U)>0$. Then,
    \begin{align*}
        \sup_{x\in D}\Pp^x(\tau_U\le t)
        &\le \sup_{x\in D}\Pp^x(\tau_{B(x,r)}\le t)\\
        &\le ct\sup_{x\in D}\sup_{|y-x|\le r}\sup_{|\xi|\le 1/r}|p(y,\xi)|\\
        &\le ct\sup_{x\in U}\sup_{|\xi|\le 1/r}|p(x,\xi)|.
    \end{align*}
    The assertion follows from the fact that the symbol $p(x,\xi)$ is locally bounded.
\end{proof}

A close inspection of the proof of Theorem \ref{popop} reveals that for all strong Markov processes which have the strong Feller property and which satisfy Proposition \ref{maxx} the function $x\mapsto \Ee^xu(X_{\tau_U})$ is continuous on $U$. There are many examples of such processes, e.g.\ all diffusion processes with strictly positive definite diffusion matrix in \cite{BH} and Feller processes which have the strong Feller property and whose generator has locally bounded coefficients.

\medskip

Let us indicate an interesting consequence of the continuity of $x\mapsto \Ee^xu(X_{\tau_U})$ for any $u\in B_b(\R^d)$ and any open set $U$.
\begin{theorem}\label{resov}
    Let $\{X_t\}_{t\geq 0}$ be a Feller process with transition semigroup $\{T_t\}_{t\geq 0}$ and generator $(A,D(A))$ given by \eqref{sf3}, \eqref{sf2}. Assume that for any bounded open set $U$ and all bounded measurable functions $u$, the function $x\mapsto \Ee^xu(X_{\tau_U})$ is continuous on $U$, where $\tau_U=\inf\{t\ge0:X(t)\notin U\}$. If
    $$
        \lim_{r\to 0}\int_{\{z\::\:|z|\ge r\}}\nu(x,dz) = \infty
        \quad\text{for all\ \ }x\in\R^d,
    $$
    then the corresponding resolvent operators $R_\alpha=\int _0^\infty e^{-\alpha t}T_tdt$, for every $\alpha>0$, have the strong Feller property, i.e.\ each $R_\alpha$ maps $B_b(\R^d)$ to $C_b(\R^d)$.
\end{theorem}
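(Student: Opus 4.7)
The plan is to combine a discounted Dynkin-type identity with the standing hypothesis on continuity of exit distributions. Fix $f\in B_b(\R^d)$ and set $g:=R_\alpha f$; then $g\in B_b(\R^d)$ with $\|g\|_\infty\le\|f\|_\infty/\alpha$, and what must be shown is that $g$ is continuous at every $x_0\in\R^d$. For any bounded open set $U\ni x_0$, applying the strong Markov property at $\tau_U$ to the representation $g(x)=\Ee^x\int_0^\infty e^{-\alpha s}f(X_s)\,ds$ yields
$$
g(x)=\Ee^x\int_0^{\tau_U}e^{-\alpha s}f(X_s)\,ds+\Ee^x\left[e^{-\alpha\tau_U}g(X_{\tau_U})\right],\qquad x\in U.
$$
Using the identity $e^{-\alpha\tau_U}=1-\alpha\int_0^{\tau_U}e^{-\alpha s}\,ds$, the right-hand side equals $\Ee^x[g(X_{\tau_U})]+r_U(x)$ with $|r_U(x)|\le 2\|f\|_\infty\,\Ee^x\tau_U$. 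The term $\Ee^x[g(X_{\tau_U})]$ is continuous on $U$ by the hypothesis of the theorem applied to the bounded measurable function $u=g$. Hence continuity of $g$ at $x_0$ reduces to the claim
$$
\sup_{x\in B(x_0,r/2)}\Ee^x\tau_{B(x_0,r)}\longrightarrow 0\qquad\text{as }r\downarrow 0.
$$

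For this exit-time estimate I would invoke the infinite L\'evy measure hypothesis via a ``first big jump'' argument. Write $\lambda(y,\rho):=\int_{|z|\ge\rho}\nu(y,dz)$ and let $\sigma_{2r}$ denote the first jump time of $X$ of magnitude at least $2r$. For $x\in B(x_0,r/2)$, any jump of size $\ge 2r$ made while $X$ is in $U:=B(x_0,r)$ forces an exit from $U$ (since $|X_t-x_0|\ge 2r-r=r$), so $\tau_U\le \sigma_{2r}$. The counting process $N_t$ of jumps of size $\ge 2r$ has predictable compensator $A_t=\int_0^t\lambda(X_s,2r)\,ds$; since $N_{\tau_U}\le 1$, optional stopping (via $\tau_U\wedge T$ and monotone convergence) yields
$$
\Big(\inf_{y\in U}\lambda(y,2r)\Big)\,\Ee^x\tau_U\;\le\;\Ee^x A_{\tau_U}\;=\;\Ee^x N_{\tau_U}\;\le\;1.
$$
The pointwise hypothesis $\lambda(x_0,\rho)\to\infty$ as $\rho\downarrow 0$, combined with the continuity of the symbol $p(\cdot,\xi)$ of the Feller generator, should then push $\inf_{y\in B(x_0,r)}\lambda(y,2r)\to\infty$ as $r\downarrow 0$, closing the argument.

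The main obstacle I anticipate is precisely the last step: upgrading the pointwise divergence $\lambda(x_0,\rho)\to\infty$ to the uniform statement $\inf_{y\in B(x_0,r)}\lambda(y,2r)\to\infty$. One cannot read this directly off the hypotheses, but it can be finessed by fixing a cutoff $\phi\in C_c^\infty(\R^d)$ that vanishes on $B(x_0,r/2)$ and equals $1$ on a large annulus around $x_0$, so that $A\phi(x)\ge \lambda(x,2r)-C$ for $x$ near $x_0$. Since $\phi\in D(A)$ implies $A\phi\in C_\infty(\R^d)$, its continuity transfers the pointwise lower bound on $\lambda(x_0,\cdot)$ to a neighborhood of $x_0$, and an application of Dynkin's formula to $\phi$ on $U$ recovers the desired upper bound on $\Ee^x\tau_U$ with uniform control.
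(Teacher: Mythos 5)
Your overall architecture is the same as the paper's: decompose $R_\alpha f$ on a small ball $U\ni x_0$ via the strong Markov property into $\Ee^x[R_\alpha f(X_{\tau_U})]$ --- which is a bounded harmonic function, hence continuous on $U$ by hypothesis --- plus a remainder of size at most $2\|f\|_\infty\,\Ee^x\tau_U$, and then make $\sup_{x\in U}\Ee^x\tau_U$ small. Where you diverge is the exit-time estimate. The paper invokes its Proposition 3.8, which yields $\Ee^z\tau_{B(z,2r)}\le c\bigl[\inf_{|y-z|<3r}\int_{\{|w|\ge 9r\}}\nu(y,dw)\bigr]^{-1}$, and then passes from this infimum over a neighbourhood to the value at the centre $x$; as written that last step goes the wrong way (the infimum over a ball containing $x$ is \emph{at most} the integral at $x$), so the pointwise hypothesis $\int_{\{|z|\ge r\}}\nu(x,dz)\to\infty$ does not directly feed into the paper's chain of inequalities. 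You correctly identify this as the crux, and your cutoff device closes it: taking $\phi\in C_c^\infty(\Rd)$ with $\phi\equiv 0$ near $x_0$ and $\phi\equiv 1$ on an annulus $\{r\le|y-x_0|\le R\}$, one has $A\phi(x_0)=\int\phi(x_0+z)\,\nu(x_0,dz)\ge\int_{\{r\le|z|\le R\}}\nu(x_0,dz)$, which can be made arbitrarily large; since $C_c^\infty(\Rd)\subset D(A)$ and $A\phi\in C_\infty(\Rd)$, continuity gives $A\phi\ge M$ on some small ball $B(x_0,\delta)$, and Dynkin's formula on that ball yields $\Ee^x\tau_{B(x_0,\delta)}\le 1/M$ uniformly --- which is all the continuity argument at $x_0$ needs. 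Two minor remarks: your intermediate ``first big jump'' compensator argument is dispensable (it rests on the nontrivial fact that the jump compensator of the Feller process is $\nu(X_{s-},dz)\,ds$, and it still leaves you with the infimum problem that the cutoff argument resolves anyway), and the inequality $A\phi(x)\ge\lambda(x,2r)-C$ is not quite what you want (the subtracted tail term need not be uniformly bounded); the clean route is the one just described, bounding $A\phi(x_0)$ from below and then using continuity of $A\phi$ directly. With that streamlining, your proof is complete and, on the delicate step, more careful than the paper's.
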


\begin{proof}
    Recall that a bounded measurable function $h:\R^d\to\R$ is said to be harmonic in $D\subset\R^d$ if for any bounded open set $U\subset \bar{U}\subset D$,
    $$
        h(x)=\Ee^x(h(X_{\tau_U})),\quad x\in U.
    $$
    This means that under the assumptions of Theorem \ref{resov}, all bounded harmonic functions are continuous.

    Next, we follow the proof of \cite[Theorem 6.3]{STD}. For fixed $x\in\R^d$, $\alpha, r>0$ and $u\in \Bb_b(\R^d)$, the strong Markov property shows for all $z\in B(x,r)$
    \begin{align*}
        R_\alpha u(z)
        &= \Ee^z\left[\int_0^{\tau_{B(x,r)}}e^{-\alpha t}u(X_s)\,ds\right]
         + \Ee^z\bigg[e^{-\alpha\tau_{B(x,r)}}R_\alpha u(X_{\tau_{B(x,r)}})\bigg]\\
        &=\Ee^z\left[\int_0^{\tau_{B(x,r)}}e^{-\alpha t} u(X_s)\,ds\right]\\
        &\quad
         +\Ee^z\bigg[\big(e^{-\alpha\tau_{B(x,r)}}-1\big)R_\alpha u\big(X_{\tau_{B(x,r)}}\big)\bigg]+ \Ee^z R_\alpha u\big(X_{\tau_{B(x,r)}}\big).
    \end{align*}
    Note that, the function $z\mapsto \Ee^z\big(R_\alpha u(X_{\tau_{B(x,r)}})\big)$ is bounded and harmonic in $B(x,r)$, hence it is continuous. Using the elementary estimates $|e^{-\alpha s}-1|\le \alpha s$ and $|e^{-\alpha s}|\le 1$ we get that the first two terms on the right hand of the inequality above do not exceed
    $$
        2\,\|u\|_\infty \sup_{z\in B(x,r)}\Ee^z\tau_{B(x,r)}
        + 2\alpha\,\|R_\alpha u\|_\infty \sup_{z\in B(x,r)}\Ee^z\tau_{B(x,r)}
        \le
        4\,\|u\|_\infty \sup_{z\in B(x,r)}\Ee^z\tau_{B(x,r)}.
    $$
    Using Proposition \ref{1u} below we find for any $y\in B(x,r)$,
    \begin{align*}
        \sup_{z\in B(x,r)}\Ee^z\tau_{B(x,r)}
        \le \sup_{z\in B(x,r)}\Ee^z\tau_{B(z,2r)}
        &\le 2 \left[{\inf_{y\,:\,|y-x|<4r}\int_{\{z\::\:|z|\ge 9r\}} \nu(y,dz)} \right]^{-1}\\
        &\le 2\left[{\int_{\{z\::\:|z|\ge 9r\}}\nu(x,dz)} \right]^{-1} .
    \qedhere
    \end{align*}
\end{proof}

The proposition below provides the estimates for the first exit time, which are interesting on their own. For L\'{e}vy processes \eqref{1u1} can be found in \cite[(3.1)]{P}.

\begin{proposition}\label{1u}
    Let $\{X_t\}_{t\ge0}$ be a Feller process with generator $(A,D(A))$ such that $C_c^\infty(\R^d)\subset D(A)$ and $A|_{C_c^\infty(\R^d)}=-p(x,D)$ with symbol $p(x,\xi)$ given by \eqref{sf2}. For any $x\in\R^d$, $1>r>0$ and $t>0$,
    \begin{equation}\label{1u1}
        \Pp^x( \tau_{B(x,r)}\le t)\le \frac{t\,H(x,r)}{1-e^{-1}}
    \end{equation}
    and
    \begin{equation}\label{1u2}
        \Pp^x( \tau_{B(x,r)}> t)\le \frac{1}{t\,h(x,r)},
    \end{equation}
    hold with
    \begin{align*}
        H(x,r)
        &=\sup_{y\,:\,|y-x|\le r} \Bigg\{\frac{2}{r^2}\left[\frac 12\sum_{j=1}^d a_{jj}(y)+ (y-x)\cdot\left(b(y)-\int_{\{r <|z|\le 1\}}z\,\nu(y,dz)\right)\right]\\
        &\phantom{\sup_{y\,:\,|y-x|\le r}\quad(y}
        +\frac{1}{r^2}\int_{\{|z|\le r\}}|z|^2\,\nu(y,dz)+\int_{\{|z|> r\}}\,\nu(y,dz) \Bigg\}
    \end{align*}
    and
    $$
        h(x,r)=\inf_{y\,:\,|y-x|<r}\int_{\{z:|z|\ge 3r\}}\nu(y,dz).
    $$
    Moreover, we have
    $$
        \frac{c_1}{H(x,t)}
        \le \Ee^x\tau_{B(x,2r)}
        \le \frac{c_2}{h(x,3r)},
    $$
    where $c_1$ and $c_2$ are some positive absolute constants.
\end{proposition}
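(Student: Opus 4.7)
All three estimates will be proved by applying Dynkin's formula, $\Ee^x u(X_\sigma)-u(x)=\Ee^x\int_0^\sigma Au(X_s)\,ds$, at suitable bounded stopping times $\sigma$ to carefully chosen bounded smooth test functions, while using the L\'evy--Khinchine representation \eqref{sf2} to compute $Au(y)$ explicitly in terms of $a(y)$, $b(y)$ and $\nu(y,\cdot)$. Since $C_c^\infty(\R^d)\subset D(A)$ by hypothesis, a routine cut-off and limiting argument extends the Dynkin identity to any bounded $C^2$-function with bounded derivatives, which is the class from which the test functions below will be drawn.

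For \eqref{1u1}, my candidate test function is $u(y):=1-\exp(-|y-x|^2/r^2)$, which is smooth, bounded by $1$, vanishes at $x$, and satisfies $u(y)\ge 1-e^{-1}$ for every $|y-x|\ge r$. Set $\tau=\tau_{B(x,r)}$. Applying Dynkin at $\tau\wedge t$ and noting that on $\{\tau\le t\}$ the process has left $B(x,r)$ at time $\tau$, so $u(X_\tau)\ge 1-e^{-1}$, gives
$$(1-e^{-1})\Pp^x(\tau\le t)\le \Ee^x u(X_{\tau\wedge t})=\Ee^x\int_0^{\tau\wedge t} Au(X_s)\,ds.$$
It remains to show $Au(y)\le H(x,r)$ for every $y\in B(x,r)$, which I obtain by plugging \eqref{sf2} in and estimating each of the four resulting pieces. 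The quadratic form $\sum a_{ij}(y)\partial_{ij}u(y)$ is at most $(2/r^2)\sum_j a_{jj}(y)$, since the off-diagonal correction $-(4/r^4)\sum a_{ij}(y)(y_i-x_i)(y_j-x_j)$ is nonpositive by positivity of $a(y)$; the drift contributes $(2/r^2)(y-x)\cdot b(y)$ damped by the factor $e^{-|y-x|^2/r^2}\le 1$; and splitting the jump integral at $|z|=r$ and $|z|=1$ and Taylor-expanding $u$ to second order for small $z$ (where $\|\nabla^2 u\|=O(r^{-2})$) yields $r^{-2}\int_{|z|\le r}|z|^2\nu(y,dz)$ for small jumps, $\int_{r<|z|\le 1}\nu(y,dz)-(2/r^2)(y-x)\cdot\int_{r<|z|\le 1}z\,\nu(y,dz)$ for the compensated medium jumps, and $\int_{|z|>1}\nu(y,dz)$ for the large jumps. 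Summing these terms reproduces exactly $H(x,r)$, which proves \eqref{1u1}.

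For \eqref{1u2} I pick $\phi\in C_c^\infty(\R^d)$ with $0\le\phi\le 1$, $\phi\equiv 1$ on $B(x,r)$ and $\phi\equiv 0$ outside $B(x,2r)$. For every $y\in B(x,r)$ all derivatives of $\phi$ vanish in a neighbourhood of $y$, so $A\phi(y)=\int[\phi(y+z)-1]\,\nu(y,dz)$; and whenever $|z|\ge 3r$ and $|y-x|<r$ we have $|y+z-x|\ge 2r$, whence $\phi(y+z)=0$. Therefore $A\phi(y)\le-\int_{|z|\ge 3r}\nu(y,dz)\le-h(x,r)$, and Dynkin's formula gives $h(x,r)\,\Ee^x[\tau_{B(x,r)}\wedge t]\le 1-\Ee^x\phi(X_{\tau_{B(x,r)}\wedge t})\le 1$. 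Letting $t\to\infty$ by monotone convergence yields $\Ee^x\tau_{B(x,r)}\le 1/h(x,r)$, and Markov's inequality turns this into \eqref{1u2}. The same scheme, with $\phi\equiv 1$ on $B(x,2r)$ and $\phi\equiv 0$ outside $B(x,7r)$ so that $|z|\ge 9r$ and $|y-x|<2r$ force $|y+z-x|\ge 7r$, furnishes $A\phi(y)\le-h(x,3r)$ for every $y\in B(x,2r)$ and hence the upper bound $\Ee^x\tau_{B(x,2r)}\le 1/h(x,3r)$ with $c_2=1$.

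The matching lower bound on $\Ee^x\tau_{B(x,2r)}$ is a direct consequence of \eqref{1u1}: it yields $\Pp^x(\tau_{B(x,2r)}>t)\ge 1-tH(x,2r)/(1-e^{-1})$, and integrating over $t\in[0,(1-e^{-1})/(2H(x,2r))]$ produces $\Ee^x\tau_{B(x,2r)}\ge(1-e^{-1})/(4H(x,2r))\ge c_1/H(x,r)$; here one uses the elementary comparison $H(x,2r)\le C\,H(x,r)$ with a universal constant $C$, which follows by inspecting the scaling of each summand of $H$. (The notation $H(x,t)$ in the displayed inequality is evidently a typo for $H(x,r)$.) The main technical obstacle is the term-by-term bookkeeping in the proof of \eqref{1u1}: each of the four pieces produced by the L\'evy--Khinchine splitting must be fitted exactly under the corresponding summand of $H(x,r)$, and the signed drift-plus-compensator expression $(y-x)\cdot\big(b(y)-\int_{r<|z|\le 1} z\,\nu(y,dz)\big)$, which carries no absolute value in the definition of $H$, must be controlled consistently with the exponential damping $e^{-|y-x|^2/r^2}\le 1$. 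It is precisely this matching that dictates the exact form of $H(x,r)$ and the constant $1-e^{-1}$ in \eqref{1u1}.
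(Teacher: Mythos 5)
Your proofs of \eqref{1u1} and \eqref{1u2} follow the paper's strategy very closely: the paper uses exactly the test function $u_r^x(y)=\exp(-|y-x|^2/r^2)$ (you work with $1-u_r^x$, which is the same computation) together with the optional stopping/Dynkin argument, and exactly the cut-off function $v_r^x$ with $v_r^x\equiv 1$ on $B(x,r)$, $\equiv 0$ off $B(x,2r)$, observing that $|z|\ge 3r$ forces $v_r^x(y+z)=0$. Two of your variations are genuine improvements in economy. First, for \eqref{1u2} you pass through $\Ee^x\tau_{B(x,r)}\le 1/h(x,r)$ and then apply Markov's inequality, whereas the paper bounds $\Pp^x(\tau>t)$ directly by restricting the time integral to $\{\tau>t\}$; both give the same constant. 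Second, your upper bound $\Ee^x\tau_{B(x,2r)}\le 1/h(x,3r)$ is obtained in one stroke from the same Dynkin computation with a cut-off supported in $B(x,7r)$, while the paper iterates \eqref{1u2} via the Markov property to get a $t^{-2}$ tail and then integrates; your route is shorter and gives a cleaner constant. One caveat you yourself flag but do not resolve: bounding the signed term $\tfrac{2}{r^2}(y-x)\cdot\bigl(b(y)-\int_{\{r<|z|\le1\}}z\,\nu(y,dz)\bigr)e^{-|y-x|^2/r^2}$ by dropping the exponential is only legitimate when that term is nonnegative. The clean fix (implicit in the paper as well) is to note that $-Au_r^x(y)=u_r^x(y)\,B(y)$ for a single bracket $B(y)$ which is bounded above by the expression defining $H(x,r)$; if $B(y)\ge 0$ one drops the factor $u_r^x(y)\le 1$, and if $B(y)<0$ then $-Au_r^x(y)<0\le H(x,r)$ trivially.

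There is, however, a genuine gap in your lower bound for $\Ee^x\tau_{B(x,2r)}$. You apply \eqref{1u1} on the ball $B(x,2r)$ and then invoke the comparison $H(x,2r)\le C\,H(x,r)$ with a universal constant. This is false in general: $H(x,2r)$ involves a supremum of $a$, $b$ and $\nu(y,\cdot)$ over the larger ball $\{|y-x|\le 2r\}$, and since the coefficients are only locally bounded and position dependent, they may be arbitrarily large on the annulus $r<|y-x|\le 2r$ without affecting $H(x,r)$ at all (take $a_{11}$ huge there, for instance). No inspection of ``scaling'' can repair this, because the obstruction is spatial, not a matter of the radius entering the formulas. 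The fix is immediate and removes the need for any comparison: since $B(x,r)\subset B(x,2r)$ one has $\tau_{B(x,2r)}\ge\tau_{B(x,r)}$, so \eqref{1u1} applied with radius $r$ gives
\[
\Ee^x\tau_{B(x,2r)}\;\ge\;\Ee^x\tau_{B(x,r)}\;\ge\;\int_0^{T}\Bigl(1-\tfrac{t\,H(x,r)}{1-e^{-1}}\Bigr)dt\;\ge\;\frac{c_1}{H(x,r)}
\]
with $T=(1-e^{-1})/(2H(x,r))$, which is exactly the asserted lower bound (the ``$H(x,t)$'' in the statement being, as you note, a typo for $H(x,r)$). With this correction your argument is complete.
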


\begin{proof}
    The L\'evy-Khinchine formula \eqref{sf2} and a standard Fourier inversion argument in \eqref{sf3} show that the generator $A$ has the following integro-differential representation
    \begin{align*}
        Au(x)
        = &\frac 12 \sum_{j,k=1}^d a_{jk}(x)\partial_{jk}u(x)
        + b(x)\cdot\nabla u(x)\\
        &+\int_{z\neq 0} \big(u(x+z)-u(x)-\nabla u(x)\cdot z\,\I_{\{|z|\le1\}}\big)\,\nu(x,dz);
    \end{align*}
    note that this expression is well defined for all $u\in C_b^2(\R^d)$, cf.\ e.g.\ \cite{S3}. For any $x\in\R^d$ and $r>0$, set $u_r^x(y)= \exp\big(\!\!-|y-x|^2/r^2\big)$. Then,
    \begin{align*}
        A&u_r^x(y)\\
        &=u_r^x(y) \Bigg\{\frac{2}{r^2}\Bigg[\frac{1}{r^2}\sum_{j,k=1}^da_{jk}(y)(y_j-x_j)(y_k-x_k) - \frac 12 \sum_{j=1}^da_{jj}(y)\\
        &\quad\quad\quad\quad\qquad \mbox{}-(y-x)\cdot \left(b(y)-\int_{\{r <|z|\le 1\}}z\,\nu(y,dz)\right)\Bigg]\\
        &\quad\quad\qquad\mbox{}+\int_{\{|z|\le r\}}\left[\exp\left({-\frac{|y-x+z|^2-|y-x|^2}{r^2}}\right)-1+\frac{2(y-x) \cdot z}{r^2}\right]\nu(y,dz)\\
        &\quad\quad\qquad\mbox{}+\int_{\{|z|>r\}} \left[ \exp\left( {-\frac{|y-x+z|^2-|y-x|^2}{r^2}}\right)-1\right]\nu(y,dz)\Bigg\}.
    \end{align*}
    Since $u^x_r\in\mathscr S$ is a rapidly decreasing function and, as such, in the domain of the Feller generator $A$---see e.g.\ \cite{S3}---standard stopping arguments show that
    $$
        M_t:=1-u^x_r(X_{t\wedge \tau_{B(x,r)}}) + \int_0^{{t\wedge \tau_{ B(x,r)}}}Au_r^x(X_s)ds
    $$
    is for every $\Pp^x$ a martingale w.r.t.\ the canonical filtration $\{\mathscr{F}_t\}_{t\geq 0}$ of the Feller process $\{X_t\}_{t\geq 0}$; moreover $\Ee^x(M_t)=0$ for all $t\ge 0$. Now
    \begin{align*}
        \Pp^x(\tau_{ B(x,r)}\le t)
        &\le (1-e^{-1})^{-1}\,\Ee^x\big(1-u_r^x(X_{t\wedge\tau_{B(x,r)}})\big)\\
        &\le (1-e^{-1})^{-1}\,\Ee^x(t\wedge \tau_{B(x,r)})\sup_{y\,:\,|y-x|\le r} \big(-Au_r^x(y)\big).
    \end{align*}
    Using the inequality $1-e^{-x}\le x$, $x>0$, we find
    \begin{align*}
        \sup_{y\,:\,|y-x|\le r}&\big(-Au_r^x(y)\big)\\
        &\le \sup_{y\,:\,|y-x|\le r}\Bigg\{\frac{2}{r^2}\Bigg[\frac 12 \sum_{j=1}^da_{jj}(y)+ (y-x)\cdot \left(b(y)-\int_{\{r <|z|\le1\}}z\,\nu(y,dz)\right)\\
        &\qquad\qquad\qquad\quad\mbox{} -\frac{1}{r^2}\sum_{j,k=1}^da_{jk}(y)(y_j-x_j)(y_k-x_k)\Bigg]\\
        &\qquad\qquad\qquad\mbox{} +\frac{1}{r^2}\int_{\{|z|\le r\}}|z|^2\,\nu(y,dz)
                           +\int_{\{|z|> r\}}\nu(y,dz) \Bigg\}\\
        &\le\sup_{y\,:\,|y-x|\le r}\Bigg\{\frac{2}{r^2}\Bigg[\frac 12 \sum_{j=1}^da_{jj}(y)+ (y-x)\cdot \left(b(y)-\int_{\{r <|z|\le 1\}}z\,\nu(y,dz)\right) \Bigg]\\
        &\quad\,\,\qquad\qquad\mbox{}+\frac{1}{r^2}\int_{\{|z|\le r\}}|z|^2\,\nu(y,dz)+\int_{\{|z|> r\}}\nu(y,dz) \Bigg\}.
    \end{align*}
    This completes the proof of \eqref{1u1}.

    For \eqref{1u2} choose a function $v_r^x\in C_c^\infty(\R^d)$ such that $v_r^x(y)=1$ if $y\in B(x,r)$ and $v_r^x(y)=0$ if $y\notin B(x,2r)$. Then,
    \begin{align*}
        1 &- \Pp^x(\tau_{B(x,r)}> t)\\
        &\ge 1-\Ee^x\big(v_r^x(X_{t\wedge\tau_{ B(x,r)}})\big)\\
        &\ge \Ee^x\left(\int_0^{t\wedge\tau_{ B(x,r)}}-Lv_r^x(X_{s-})\,ds\right)\\
        &= \Ee^x\left(\int_0^{t\wedge\tau_{B(x,r)}}\left[\int\Big(v_r^x(y)-v_r^x(y+z)+\nabla v_r^x(y)\cdot z\I_{\{|z|\le 1\}}\Big)\nu(y,dz)\right]\bigg|_{y=X_{s-}}ds\right)\\
        &= \Ee^x\left(\int_0^{t\wedge\tau_{B(x,r)}}\left[\int\Big(1-v_r^x(y+z)\Big)\nu(y,dz)\right]\bigg|_{y=X_{s-}}ds\right)\\
        &\ge t\,\Pp^x(\tau_{ B(x,r)}> t)\,\inf_{y\in B(x,r)}\int_{\{z\::\:|y+z-x|\ge 2r\}}\nu(y,dz)\\
        &\ge t\,\Pp^x(\tau_{ B(x,r)}> t)\,\inf_{y\in B(x,r)}\int_{\{z\::\:|z|\ge 3r\}}\nu(y,dz).
    \end{align*}
    This proves \eqref{1u2}.

    Finally, for any $x\in\R^d$ and $r>0$, the Markov property and \eqref{1u2} entail
    \begin{align*}
        \Pp^x(\tau_{B(x,r/2)}>t)
        &= \Ee^x\Big(\Pp^{X_{t/2}}(\tau_{B(x,r/2)}>t/2)\cdot \I_{\{\tau_{B(x,r/2)}>t/2\}}\Big)\\
        &\le \Pp^x(\tau_{B(x,r/2)}>t/2)\,\sup_{y\in B(x,r/2)}\Pp^y(\tau_{B(y,r)}>t/2)\\
        &\le 4\Big(t^2\,h(x,r/2)\,\inf_{y\in B(x,r/2)}h(y,r)\Big)^{-1}\\
        &\le 4t^{-2}\,\big(h(x,3r/2)\big)^{-2}.
    \end{align*}
    This estimate and \eqref{1u1} at hand allow us to use the argument from \cite[Theorem 4.7]{S3} to get the two-sided estimate for $\Ee^x \tau_{B(x,2r)}$.
\end{proof}

We close this section with the remark that the argument of Theorem \ref{resov} also yields
\begin{corollary}\label{resov1}
    Let $\{X_t\}_{t\geq 0}$ be a Feller process with transition semigroup $\{T_t\}_{t\geq 0}$ and generator $(A,D(A))$ given by \eqref{sf3}, \eqref{sf2}. Assume that for any bounded open set $U$ and all bounded continuous functions $u\in C_b(\R^d)$, the function $x\mapsto \Ee^xu(X_{\tau_U})$ is continuous on $U$, where $\tau_U=\inf\{t\ge0:X(t)\notin U\}$. If
    $$
        \lim_{r\to 0}\int_{\{z\::\:|z|\ge r\}}\nu(x,dz) = \infty
        \quad\text{for all\ \ }x\in\R^d,
    $$
    then the corresponding resolvent operators $R_\alpha=\int _0^\infty e^{-\alpha t}T_tdt$, for every $\alpha>0$, have the $C_b$-Feller property, i.e.\ each $R_\alpha$ maps $C_b(\R^d)$ to $C_b(\R^d)$.
\end{corollary}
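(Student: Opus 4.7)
The proof adapts the argument of Theorem \ref{resov} to the weaker hypothesis. Fix $x\in\R^d$, $\alpha>0$ and $u\in C_b(\R^d)$. For $r>0$ and $z\in B(x,r)$, write $\tau:=\tau_{B(x,r)}$ and, exactly as in the proof of Theorem \ref{resov}, use the strong Markov property to decompose
\begin{align*}
R_\alpha u(z) &= \underbrace{\Ee^z\!\left[\int_0^\tau e^{-\alpha t}u(X_t)\,dt\right]}_{=:\,I_1(z)} + \underbrace{\Ee^z\!\left[(e^{-\alpha\tau}-1)R_\alpha u(X_\tau)\right]}_{=:\,I_2(z)} \\
&\quad + \underbrace{\Ee^z\!\left[R_\alpha u(X_\tau)\right]}_{=:\,I_3(z)}.
\end{align*}
The estimates $|I_1(z)|\le \|u\|_\infty \Ee^z\tau$ and $|I_2(z)|\le \|u\|_\infty \Ee^z\tau$, combined with Proposition \ref{1u} and the hypothesis $\lim_{r\to 0}\int_{\{|z|\ge r\}}\nu(x,dz)=\infty$, yield $\sup_{z\in B(x,r)}(|I_1(z)|+|I_2(z)|)\to 0$ as $r\to 0$. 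This part is identical to Theorem \ref{resov}.

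The crucial new point is the continuity of $I_3$ on $B(x,r)$ under the weakened hypothesis. In Theorem \ref{resov} one applied the strong-Feller-style exit hypothesis directly to the merely measurable function $R_\alpha u$; here the hypothesis is only available for bounded \emph{continuous} functions. The plan is to Fubinize: since $R_\alpha u(y)=\int_0^\infty e^{-\alpha s}(T_s u)(y)\,ds$ and $\|T_s u\|_\infty\le \|u\|_\infty$,
\begin{equation*}
I_3(z)=\int_0^\infty e^{-\alpha s}\,\Ee^z\!\left[(T_s u)(X_\tau)\right]ds.
\end{equation*}
For each fixed $s\ge 0$ one has $T_s u\in C_b(\R^d)$ (this is the standard fact that a Feller semigroup on $\R^d$ maps $C_b$ into itself, arising from the quasi-left-continuity of $X$ at the fixed time~$s$). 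The hypothesis of the corollary, applied to the bounded continuous function $T_s u$, then gives continuity of $z\mapsto \Ee^z[(T_s u)(X_\tau)]$ on $B(x,r)$, and dominated convergence (with integrable majorant $\|u\|_\infty e^{-\alpha s}$) transfers this to $I_3$.

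To conclude, given $\varepsilon>0$, first choose $r$ small enough that $|I_1|+|I_2|\le \varepsilon/2$ uniformly on $B(x,r)$, and then use continuity of $I_3$ at $x$ to obtain $|R_\alpha u(z)-R_\alpha u(x)|\le \varepsilon$ for $z$ in a suitable neighbourhood of~$x$. The main obstacle I anticipate is the auxiliary verification that $T_s$ maps $C_b(\R^d)$ into $C_b(\R^d)$; once this is in hand, the Fubini rearrangement and the $C_b$-exit hypothesis together replace the use of ``bounded harmonic $\Rightarrow$ continuous'' which featured in Theorem \ref{resov}, and the rest of the argument is a routine adaptation.
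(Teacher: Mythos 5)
Your decomposition of $R_\alpha u$ and the treatment of $I_1$ and $I_2$ via Proposition \ref{1u} are exactly the paper's argument for Theorem \ref{resov}, and your final $\varepsilon$-argument is fine. The proof stands or falls with the continuity of $I_3$, and here there is a genuine gap: the step ``$T_s u\in C_b(\R^d)$ for $u\in C_b(\R^d)$'' is not a standard fact about Feller semigroups --- it is precisely the $C_b$-Feller property, which the introduction of this very paper emphasizes is \emph{not} automatic for Feller semigroups (by \cite[Theorem 3.2]{S1} it is equivalent to $T_s(C_c^\infty(\R^d))\subset C_b(\R^d)$ \emph{and} $T_s1\in C_b(\R^d)$, and the second condition can fail for non-conservative Feller processes; nothing in the hypotheses of Corollary \ref{resov1} rules this out). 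The justification you offer, quasi-left-continuity of $X$ at the fixed time $s$, concerns regularity in the \emph{time} variable along stopping times and says nothing about continuity of $x\mapsto T_su(x)$; the issue is mass escaping to infinity (or being killed) discontinuously in the starting point. Since the corollary's hypothesis is restricted to \emph{continuous} integrands, you cannot apply it to $T_su$ without first knowing $T_su\in C_b(\R^d)$, so the Fubini step does not close.

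For comparison: in Theorem \ref{resov} the continuity of $I_3(z)=\Ee^z\bigl[R_\alpha u(X_{\tau_{B(x,r)}})\bigr]$ is obtained by observing that this function is bounded and harmonic in $B(x,r)$ and that the (stronger) hypothesis there makes every bounded \emph{measurable} harmonic function continuous. Under the weakened hypothesis of Corollary \ref{resov1} that implication is lost, and applying the hypothesis directly to $v=R_\alpha u$ would be circular, since the continuity of $R_\alpha u$ is the conclusion. So the real content of the corollary is exactly the point your argument assumes away. A repair would have to either (i) establish the $C_b$-Feller property of $\{T_t\}_{t\ge0}$ from the stated hypotheses before Fubinizing, or (ii) reduce to $u\in C_\infty(\R^d)$ (where $T_su\in C_\infty(\R^d)\subset C_b(\R^d)$ by the Feller property) and then control $R_\alpha\bigl(u(1-\chi_k)\bigr)$ locally uniformly for cut-offs $\chi_k$ --- neither of which is carried out in your proposal.
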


\bigskip

\noindent{\bf Acknowledgement.} Financial support through DFG (grant
Schi 419/5-1) and DAAD (PPP Kroatien) (for Ren\'{e} L.\ Schilling)
and the Alexander-von-Humboldt Foundation
  and the Natural Science Foundation of Fujian $($No.\ 2010J05002$)$
(for Jian Wang) is gratefully acknowledged.

\end{document}